\newtheorem{thm}{Theorem}
\newtheorem{cor}{Corollary}
\newtheorem{prop}{Proposition}
\newtheorem{prob}{Problem}
\newtheorem{defin}{Definition}
\def\d{\,{\rm{d}}}
\def\s{\,{\rm{sp}}}
\def\c{\,{\rm{cp}}}
\def\sm{\,{\rm{sm}}}
\def\cm{\,{\rm{cm}}}
\def\v{\,{\varsigma}}
\def\w{\,{\mathfrak{w}}}
\newcommand{\m}{\mathbf}
\newcommand{\bl}{\bullet}
\title[The projective translation equation]
{The projective translation equation
and unramified $2$-dimensional flows with rational vector fields}
\author[G. Alkauskas]{Giedrius Alkauskas}
\address{Vilnius University, Department of Mathematics and Informatics, Naugarduko 24, LT-03225 Vilnius, Lithuania}
\email{giedrius.alkauskas@gmail.com}
\begin{document}
\begin{abstract}
Let $\m{x}=(x,y)$. Previously we have found all rational solutions of the $2$-dimensional projective translation equation, or PrTE,  $(1-z)\phi(\m{x})=\phi(\phi(\m{x}z)(1-z)/z)$; here $\phi(\m{x})=(u(x,y),v(x,y))$ is a pair of two (real or complex) functions. Solutions of this functional equation are called \emph{projective flows}. A vector field of a rational flow is a pair of $2$-homogenic rational functions. On the other hand, only special pairs of $2$-homogenic rational functions give rise to rational flows. In this paper we are interested in all non-singular (satisfying the boundary condition) and unramified (without branching points, i.e. single-valued functions in $\mathbb{C}^{2}\setminus\{\text{union of curves}\}$) projective flows whose vector field is still rational. If an orbit of the flow is given by homogeneous rational function of degree $N$, then $N$ is called the level of the flow. We prove that, up to conjugation with $1$-homogenic birational plane transformation, these are of $6$ types: 1) the identity flow; 2) one flow for each non-negative integer $N$ - these flows are rational of level $N$; 3) the level $1$ exponential flow, which is also conjugate to the level $1$ tangent flow; 4) the level $3$ flow expressable in terms of Dixonian (equianharmonic) elliptic functions; 5) the level $4$ flow expressable in terms of lemniscatic elliptic functions; 6) the level $6$ flow expressable in terms of Dixonian elliptic functions again. This reveals another aspect of the PrTE: in the latter four cases this equation is equivalent and provides a uniform framework to addition formulas for exponential, tangent, or special elliptic functions (also addition formulas for polynomials and the logarithm, though the latter appears only in branched flows). Moreover, the PrTE turns out to have a connection with P\'{o}lya-Eggenberger urn models. Another purpose of this study is expository, and we provide the list of open problems and directions in the theory of PrTE; for example, we define the notion of quasi-rational projective flows which includes curves of arbitrary genus. This study, though seemingly analytic, is in fact algebraic and our method reduces to algebraic transformations in various quotient rings of rational function fields.
\end{abstract}
\pagestyle{fancy}
\fancyhead{}
\fancyhead[LE]{{\sc Projective translation equation}}
\fancyhead[RO]{{\sc G. Alkauskas}}
\fancyhead[CE,CO]{\thepage}
\fancyfoot{}
\date{September 15, 2012}
\subjclass[2010]{Primary 39B12, 33E05, 35F05; Secondary 14H52, 14H05, 14E05}
\keywords{Projective translation equation, flows, rational vector fields, iterative functional equation, elliptic curves, elliptic functions, Dixonian elliptic functions, linear PDE's, finite group representations, hypergeometric functions}
\thanks{The author gratefully acknowledges support by the Lithuanian Science Council whose postdoctoral fellowship
 is being funded by European Union Structural Funds project ``Postdoctoral Fellowship Implementation in Lithuania".}

\maketitle
\setcounter{tocdepth}{1} \tableofcontents
\section{Introduction}
\label{sec:intro}
For convenience reasons, we write $F(x,y)\bl G(x,y)$ instead of
$\big{(}F(x,y),G(x,y)\big{)}$. We also write $\m{x}=x\bl y$. This paper is a continuation of \cite{alkauskas1,alkauskas2}. It is completely independent from the first, and also mostly independent from the second paper, apart from the (\cite{alkauskas2}, Section 4, steps I and III), which are crucial for the current work. The main needed steps will be summarized here in the Subsection \ref{sub-quadratic}. The current paper is an introduction to the prospective paper \cite{alkauskas4} where general plane vector fields are treated in the framework of quasi-rational flows; see the Subsection \ref{sub-qf}.
\subsection{Background}
\label{back}
The general ``affine" translation equation is a functional equation of the type
\begin{eqnarray*}
F(\m{x},s+t)=F(F(\m{x},s),t)\quad \m{x}\in\mathbb{C}^{n},s,t\in\mathbb{C},
\end{eqnarray*}
where $F:\mathbb{C}^{n}\times\mathbb{C}\mapsto\mathbb{C}^{n}$. Note that this is not the most general form of the translation equation; to get the feeling of the variety of structure and methods that underlies this equation, the reader may consult \cite{aczel,moszner1,moszner2,r-r1,r-r2}. Our research concentrates on the special case of this equation, where $F(\m{x},t)$ is of the form $\phi(\m{x}t)t^{-1}$; this choice is not accidental, since it exhibits several fascinating features not encountered in the general affine solutions (``affine flows").  This special case was first introduced in \cite{alkauskas1} where we considered the $2$-dimensional equation from the topologic point of view in the simplest case of the sphere $\mathbb{S}^{2}$. \\

In the paper \cite{alkauskas2} we solved the following problem: find all rational solutions of the $2$-dimensional projective translation equation (PrTE for short)
\begin{eqnarray}\setlength{\shadowsize}{2pt}\shadowbox{$\displaystyle{\quad
(1-z)\phi(\m{x})=\phi\Big{(}\phi(\m{x}z)\frac{1-z}{z}\Big{)}}$.\quad}\label{funkk}
\end{eqnarray}
Here $\phi(x,y)=u(x,y)\bl v(x,y)$ is a pair of rational functions in two real or complex variables. It appears that, up to conjugation with a $1$-homogenic birational plane transformation ($1$-BIR in short, for the structure of these see the Appendix in \cite{alkauskas2}), all rational solutions of this equation are as follows: the zero flow $0\bl 0$, two singular flows $0\bl y$ and $0\bl\frac{y}{y+1}$, an identity flow $x\bl y$, and one non-singular flow for each non-negative integer $N$, called \emph{the level} of the flow; this solution is given by $\phi_{N}$ below. The non-singular solution of this equation is called \emph{the projective flow}. \emph{Non-singularity} means that a flow satisfies the boundary condition
\begin{eqnarray}
\lim\limits_{z\rightarrow 0}\frac{\phi(\m{x}z)}{z}=\m{x}.
\label{bound}
\end{eqnarray}
Thus, each rational projective flow $u(x,y)\bl v(x,y)$ is automatically a birational plane transformation whose inverse is given by $-u(-x,-y)\bl -v(-x,-y)$.\\

The main object associated with a solution is its \emph{vector field} given by
\begin{eqnarray}
\mathbf{v}(\phi;x,y)=\varpi(x,y)\bl\varrho(x,y)=\frac{\d}{\d z}\frac{\phi(xz,yz)}{z}\Big{|}_{z=0}.
\label{vec}
\end{eqnarray}
Vector field is necessarily a pair of $2$-homogenic functions. For smooth functions, the functional equation (\ref{funkk}) implies the PDE
\begin{eqnarray}
u_{x}(x,y)(\varpi(x,y)-x)+
u_{y}(x,y)(\varrho(x,y)-y)=-u(x,y),\label{g-parteq}
\end{eqnarray}
and the same PDE for $v$, with boundary conditions
\begin{eqnarray*}
\lim\limits_{z\rightarrow 0}\frac{u(xz,yz)}{z}=x,\quad
\lim\limits_{z\rightarrow 0}\frac{v(xz,yz)}{z}=y.
\end{eqnarray*}
In principle, the PDE (\ref{g-parteq}) with the above boundary conditions is equivalent to (\ref{funkk}); but the proof of this equivalence is possible only in each case separately, since there arises the complication to determine the definition domain of the flow - for example, explore $\phi^{{\rm log}}$ below. Thus, not all functions in two complex variables can be iterated without restrictions - this restriction is explicitly present in (\ref{funkk}), but only implicitly present in (\ref{g-parteq}), and only \emph{a posteriori}, after we have found the solution, whether in analytic form in terms of known special functions, or via an independent analysis.\\

Each point under a flow possesses the orbit, which is either a curve (when we deal with flows over $\mathbb{R}$) or a surface (complex curve over $\mathbb{C}$), or a single point; the orbit is defined by
\begin{eqnarray*}
\mathscr{V}(\m{x})=\Big{\{}\frac{\phi(\m{x}z)}{z}:z\in\mathbb{R}\text{ or }\mathbb{C}\Big{\}}.
\end{eqnarray*}
For rational flows of level $N$, there exists an $N$th degree homogenic rational function $\mathscr{W}(x,y)$ such that the orbits are the curves $\mathscr{W}(x,y)=c$, $c\in\mathbb{C}\cup\{\infty\}$. We will also refer to non-rational flows as also being of level $N$ if the orbits are given by $N$th degree homogeneous rational function.
It appears that the vector field of a rational flow is a pair of $2$-homogenic rational functions. The main theorem in \cite{alkauskas2} gives the first structural result for the rational flows; many more properties of the flows which take into account not only the level but the birational conjugation itself will be treated in \cite{alkauskas3}. The main idea of the proof (which is rather lengthy) is that using conjugation with $1$-BIR, the vector field can be reduced step by step, leading to rational $2$-homogenic functions with numerators of smaller degree, and eventually, provided we do not encounter an obstruction, to quadratic forms. The obstruction arises when we hit the vector field whose both coordinates are proportional. So, we are left to find all rational flows whose vector field is given by a pair of two quadratic forms in two variables, or whose one of the coordinates vanish (this is equivalent, after a linear conjugation, to the obstruction). The first case is a rewarding part of the proof. For example, we find that some of these pairs of quadratic forms lead to rational solutions, while others should be discarded since they arise from non-rational flows. In this paper we take a closer look at non-rational solutions of (\ref{funkk}) whose vector field is a pair of quadratic forms. In general, every vector field whose orbit is given by a homogeneous rational function of degree $N\in\mathbb{N}_{0}$ gives rise, generally, not to a rational flow but to a quasi-rational flow; see the Subsection \ref{sub-qf} and \cite{alkauskas4}.
\subsection{Addition formulas. Motivation}Consider the following two-variable two-dimensional functions:
\begin{alignat*}{6}
\phi^{{\rm exp}}(\m{x})&= xe^{y}\bl y&&=xf(y)\bl y;\\
\phi^{{\rm tan}}(\m{x})&=\frac{xy+y^2\tan y}{y-x\tan y}\bl y&&=\frac{xy+y^2f(y)}{y-xf(y)}\bl y;\\
\phi_{ N}(\m{x})&=x(y+1)^{N-1}\bl\frac{y}{y+1}&&=xf(y)\bl\frac{y}{y+1};\\
\phi^{{\rm log}}(\m{x})&=\frac{xy}{(y+1)[y+x\log(y+1)]}\bl \frac{y}{y+1}&&=
\frac{xy}{(y+1)[y+xf(y)]}\bl \frac{y}{y+1}.
\end{alignat*}
(Here in each case $f(y)$ stands for the corresponding function, $e^{y}$, $\tan(y)$, and so on). We can check directly that these functions satisfy the PrTE (\ref{funkk}). The vector fields and the equations for the orbits of these flows are given by
\begin{center}
\begin{tabular}{r c l}
${\rm exp}$: & $xy\bl 0$, & $\mathscr{W}(x,y)=y$;\\
${\rm tan}$: & $x^2+y^2\bl 0$, & $\mathscr{W}(x,y)=y$;\\
        $N$: & $(N-1)xy\bl(-y^2)$, & $\mathscr{W}(x,y)=xy^{N-1}$;\\
${\rm log}$: & $-x^2-xy\bl(-y^2)$, & $\mathscr{W}(x,y)=\exp(yx^{-1})y$.
\end{tabular}
\end{center}
(In the last case the orbits are non-algebraic curves). In fact, replace the known function in the expression of $\phi$ with the unknown $f$ (this is shown above). Now require that the so obtained function satisfies (\ref{funkk}). Eventually, this turns out to be equivalent to nothing else but the standard addition formulas:
\begin{eqnarray*}
{\rm exp}&:&f(x+y)=f(x)f(y);\\
{\rm tan}&:&f(x+y)=\frac{f(x)+f(y)}{1-f(x)f(y)};\\
        N&:&f(xy-1)=f(x-1)f(y-1);\\
{\rm log}&:&f(xy-1)=f(x-1)+f(y-1).
\end{eqnarray*} The first two can be restated in a symmetric way; that is, if $u+v+w=0$, then
\begin{eqnarray*}
\exp u\cdot\exp v\cdot\exp w&=&1,\\
\tan u\cdot\tan v\cdot\tan w&=&\tan u+\tan v+\tan w.
\end{eqnarray*}
These are superior to the former, since if we know additionally that $\exp(0)=1$ and $\tan(0)=0$, then these identities also encode the symmetry properties $\exp(u)\exp(-u)=1$ and $\tan(u)+\tan(-u)=0$. 
In \cite{alkauskas2} we found out that the $k$-dimensional PrTE is directly related to birational transformations of $P^{k-1}(\mathbb{C})$, as opposed to general ``affine" translation equation which is tied to birational affine transformations of $\mathbb{C}^{k}$, albeit this dependency is of different nature, the first case being much more involved. Now we see another fascinating feature of (\ref{funkk}); namely, the PrTE provides a uniform framework for addition formulas for certain functions: abelian - $\exp(y)$, $\tan(y)$; algebraic - $(y+1)^{N-1}$, if $N\in\mathbb{Q}$; integrals over algebraic - $\log(y+1)$. This is the simplest, $2$-dimensional case of the theory. As we will see in Theorem \ref{thm1}, the above list will not include trigonometric functions $\sin(y)$ and $\cos(y)$, but will include rather special elliptic functions related to regular hexagonal and square lattices. The higher dimensional case even in the setting of \cite{alkauskas2} (that is, classification of higher dimensional rational projective flows) is open and very promising; see the Subsection \ref{dim}. \\

Now we formulate the main problem of this paper.
\begin{prob}Find all flows, that is, bivariate functions satisfying (\ref{funkk}), which are defined for $\m{x}\in\mathbb{C}^{2}\setminus\{\text{union of isolated curves}\}$, whose vector field is rational, and which are single-valued functions, i.e. without branching points. 
\label{main-prob}
\end{prob}

Thus, the flow $\phi^{{\rm log}}$ has an infinite branching point at $y=-1$. The flow $\phi_{N}$ has an infinite branching if $N\notin\mathbb{Q}$ and a finite one if $N\in\mathbb{Q}\setminus\mathbb{Z}$. So, our chief interest is only the case $N\in\mathbb{Z}$, and this was dealt with in \cite{alkauskas2}. On the other hand, the flows $\phi^{{\rm exp}}$ and $\phi^{{\rm tan}}$ are single-valued functions for all $\m{x}\in\mathbb{C}^{2}$. With the help of linear conjugation we can force the exponential and tangent flows to be symmetric with respect to the linear involution $(x,y)\mapsto(y,x)$. Thus, we summarize this as
\begin{prop}Let
\begin{eqnarray*}
\phi^{{\rm e}}=u^{{\rm e}}(x,y)&\bl& u^{{\rm e}}(y,x),\quad u^{e}(x,y)=\frac{1}{2}\big{(}(x-y)e^{x+y}+x+y\big{)},\text{ and}\\
\phi^{{\rm t}}=u^{{\rm t}}(x,y)&\bl& u^{{\rm t}}(y,x),
\quad u^{{\rm t}}(x,y)=\frac{(x+y)x+(x+y)y\tan(x+y)}{x+y+(y-x)\tan(x+y)}.
\end{eqnarray*}
These functions are symmetric projective flows. Both $u^{{\rm e}}$ and $u^{{\rm t}}$ satisfy the PDE (\ref{g-parteq}), where $\varpi(x,y)=\frac{1}{2}(x^2-y^2)=\varrho(y,x)$ in the exponential case, and $\varpi(x,y)=x^2+y^2=\varrho(y,x)$ in the tangential case, respectively. The projective flow property is equivalent to addition formulas for corresponding functions.
\label{prop1}
\end{prop}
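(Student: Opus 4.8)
The plan is to carry out one direct substitution into (\ref{funkk}) and to extract all three assertions from it, treating the exponential and tangent cases in parallel. First I would record the explicit forms: writing $\m{x}z=(xz,yz)$ and $s=x+y$, one has $u^{{\rm e}}(xz,yz)/z=\frac12\big((x-y)e^{zs}+s\big)$, while $u^{{\rm t}}(xz,yz)/z$ is the corresponding M\"obius expression with $\tan(zs)$ in place of $e^{zs}$. In particular $\lim_{z\to 0}u^{{\rm e}}(xz,yz)/z=x$, and likewise for $u^{{\rm t}}$, so the boundary condition (\ref{bound}) holds and $\phi^{{\rm e}},\phi^{{\rm t}}$ are non-singular; they are symmetric under $(x,y)\mapsto(y,x)$ by the very shape of their defining formulas. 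Substituting into the right-hand side of (\ref{funkk}), the pair $\phi(\m{x}z)(1-z)/z=(a,b)$ satisfies $a+b=(1-z)s$ in both cases, together with $a-b=(1-z)(x-y)e^{zs}$ in the exponential case and the analogous relation with $\tan(zs)$ in the tangent case. Evaluating $\phi$ at $(a,b)$ and simplifying, one sees that (\ref{funkk}) is \emph{equivalent} to $e^{zs}e^{(1-z)s}=e^{s}$ in the exponential case and to the tangent addition law $\tan s=\dfrac{\tan(zs)+\tan((1-z)s)}{1-\tan(zs)\tan((1-z)s)}$ in the tangent case; equivalently, if $e^{y}$ (resp.\ $\tan y$) is replaced by an unknown $f$ of the appropriate argument, then (\ref{funkk}) reduces to $f(p)f(q)=f(p+q)$ (resp.\ $f(p+q)=\frac{f(p)+f(q)}{1-f(p)f(q)}$) with $p=zs$, $q=(1-z)s$. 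Since $\exp$ and $\tan$ obey these identities, $\phi^{{\rm e}}$ and $\phi^{{\rm t}}$ are projective flows; this settles the first and the last assertions simultaneously.

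Next I would read the vector fields off from (\ref{vec}) by differentiating at $z=0$: $\varpi(x,y)=\frac{\d}{\d z}\,\frac{u^{{\rm e}}(xz,yz)}{z}\Big|_{z=0}=\frac12(x^{2}-y^{2})$, and applying the interchange $x\leftrightarrow y$ to the second coordinate gives $\varrho(x,y)=\varpi(y,x)=\frac12(y^{2}-x^{2})$; the same computation in the tangent case yields $\varpi(x,y)=x^{2}+y^{2}$. These normal forms can also be reached by conjugating the asymmetric flows $xe^{y}\bl y$ and $\frac{xy+y^{2}\tan y}{y-x\tan y}\bl y$: conjugation of a projective flow by a $1$-BIR is again a projective flow, for an invertible linear $A$ the condition (\ref{bound}) is preserved automatically, and the vector field transforms by $\mathbf{v}(A^{-1}\!\circ\phi\circ A;\m{x})=A^{-1}\,\mathbf{v}(\phi;A\m{x})$; with $A(x,y)=(x-y,x+y)$ one checks $A^{-1}\!\circ(xe^{y}\bl y)\circ A=u^{{\rm e}}(x,y)\bl u^{{\rm e}}(y,x)$ and $A^{-1}\big((x^{2}-y^{2})\bl 0\big)=\frac12(x^{2}-y^{2})\bl\frac12(y^{2}-x^{2})$, and an analogous involution-conjugation symmetrizes the tangent flow.

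The PDE (\ref{g-parteq}) then requires no separate argument: once (\ref{funkk}) is known it is the infinitesimal consequence recorded in the Introduction. A self-contained check is also short: for $u^{{\rm e}}$ one has $u^{{\rm e}}_{x}=\frac12\big((1+x-y)e^{x+y}+1\big)$ and $u^{{\rm e}}_{y}=\frac12\big((x-y-1)e^{x+y}+1\big)$, and $u^{{\rm e}}_{x}\big(\tfrac12(x^{2}-y^{2})-x\big)+u^{{\rm e}}_{y}\big(\tfrac12(y^{2}-x^{2})-y\big)=-u^{{\rm e}}$ follows after collecting the coefficient of $e^{x+y}$ (which collapses to $y-x$) and the polynomial part (which collapses to $-(x+y)$) separately; the tangent case is identical with $\sec^{2}$ in place of the exponential.

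I expect the only laborious step to be the tangent half of the substitution in the first paragraph. Since the flow coordinate is a M\"obius function of $x$ for fixed $x+y$, forming $\phi\big(\phi(\m{x}z)(1-z)/z\big)$ produces nested fractions, and the collapse to $(1-z)\phi(\m{x})$ becomes visible only once the tangent addition theorem is applied at exactly the right moment, namely to replace $\tan(zs)+\tan((1-z)s)$ and $1-\tan(zs)\tan((1-z)s)$ by expressions in $\tan s$. Keeping the signs in $(x-y)$ versus $(y-x)$ straight, and fixing the linear involution with respect to which the symmetrization is carried out, is where care is needed; but no idea beyond the reduction above enters.
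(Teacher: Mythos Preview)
Your proposal is correct and follows essentially the same route the paper takes: the paper does not give a detailed proof of this proposition but simply states that the symmetric forms are obtained from $\phi^{\exp}$ and $\phi^{\tan}$ by linear conjugation, and that plugging the unknown $f$ into the flow and imposing (\ref{funkk}) reduces to the standard addition formulas. Your write-up carries out exactly these verifications (direct substitution, reduction to $e^{zs}e^{(1-z)s}=e^{s}$ and the tangent addition law, computation of the vector field, and the conjugation by $A(x,y)=(x-y,x+y)$), so there is nothing to add.
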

Note however that these two cases are conjugate and will fall under the same item in the classification (see Theorem \ref{thm1}). In fact, let us define the $1$-BIR by
\begin{eqnarray*}
\ell(x,y)=\frac{x(x+y)}{y}\bl(x+y).
\end{eqnarray*}
Then
\begin{eqnarray*}
\ell^{-1}\circ\phi^{{\rm exp}}\circ\ell(x,y)=\frac{x(x+y)e^{x+y}}{x e^{x+y}+y}\bl\frac{y(x+y)}{x e^{x+y}+y},
\end{eqnarray*}
and the latter projective flow is linearly conjugate (over $\mathbb{C}$) to $\phi^{{\rm tan}}$. 
\subsection{Quadratic forms as vector fields of projective flows}
\label{sub-quadratic}
We will see later that along with $(y+1)^{N-1}$, $\tan(y)$ and $\exp(y)$, there are three more pairs of functions which complete the picture. As a crucial part of our investigations, let us make a brief summary of the Step II of the proof of the main theorem in \cite{alkauskas2}. Let the vector field of the projective flow $\phi=u\bl v$ be given by $\varpi(x,y)\bl \varrho(x,y)$, where both coordinates are $2$-homogenic rational functions, and the common denominator has a degree $d\geq 1$. We found that, unless $\varpi$ and $\varrho$ are proportional, there exists a $1$-BIR $\ell$ such that the vector field of the flow $\ell^{-1}\circ\phi\circ\ell$ is a pair of rational functions with lowered degree in the common denominator. Thus, we are left to consider cases where either $\varpi$ and $\varrho$ are proportional, or they both are quadratic forms. If the first statement holds, then we see that (after a linear conjugation) one can confine to the case $\varrho(x,y)\equiv 0$. This implies $v(x,y)=y$. If $\phi$ were a rational function, then this would necessarily mean that $u$ is a Jonqui\`{e}res transformation; this would imply that $\varpi$ is in fact a quadratic form, and calculations in (\cite{alkauskas2}, Step II) provide the complete solution. The flows $\phi^{{\rm exp}}$ and $\phi^{{\rm tan}}$ arise exactly from this analysis in cases $\varpi(x,y)=xy$ and $\varpi(x,y)=x^2+y^2$, respectively. In the setting of the Problem \ref{main-prob}, the pair $u(x,y)\bl y$ with the vector field $\varpi(x,y)\bl 0$ ought no longer be rational, the implication that $u$ is a Jonqui\`{e}res transformation is irrelevant, and we need to provide an independent analysis of the solution of (\ref{g-parteq}) in case $\varpi$ is any $2$-homogenic rational function, and $\varrho\equiv 0$. This is accomplished in the Subsection \ref{sub-zero}.\\

Suppose now that both $\varpi$ and $\varrho$ are quadratic forms, see (\cite{alkauskas2}, Step III). We found that if $x\varrho-y\varpi$ is a cube of a linear polynomial, then this always leads to the flow with a ramification of the type $\log(y+1)$. If $x\varrho-y\varpi$ is not a cube, then the vector field, with the help of linear conjugation, can be transformed into
\begin{eqnarray*}
\varpi(x,y)=ax^{2}+bxy,\quad\varrho(x,y)=cxy+dy^{2}.
\end{eqnarray*}
The cases $b=0$ or $c=0$ lead to rational solutions or flows with ramification of the type $(y+1)^{r}$, $r\notin\mathbb{Z}$. So, let $b,c\neq 0$. If $d=0$ or $a=0$, this again leads to a ramification of the type $\log(y+1)$. So, after a linear conjugation, we may assume that
\begin{eqnarray*}
\varpi(x,y)=x^{2}+Bxy,\quad\varrho(x,y)=Cxy+y^{2}.
\end{eqnarray*}
The flow with this vector field has ramifications of both types $(1-y)^{-B}$ and $(1-x)^{-C}$, so, if there is none, we get the arithmetic condition
\begin{eqnarray}
B,C\in\mathbb{Z}.
\label{arithm1}
\end{eqnarray}
If $B=1$, then again, $C=1$, otherwise the flow is ramified, and $B=C=1$ gives exactly the rational flow of level $0$. Assume $B,C\neq 0,1$. Then the vector field $\varpi\bl\varrho$ with the help of linear conjugation can be transformed into the vector field
\begin{eqnarray}
x^2+\frac{B+C-2}{BC-1}xy\bl Cxy+y^2.
\label{auto}
\end{eqnarray}
 Since this is also unramified, we get another arithmetic condition
\begin{eqnarray}
\frac{B+C-2}{BC-1}\in\mathbb{Z}.
\label{arithm2}
\end{eqnarray}
The case $B+C=2$ leads to a rational or algebraically ramified flow. It is fascinating that if $B,C\neq 0,1$, and $B+C\neq 2$, then all pairs $(B,C)$ which satisfy as simple arithmetic conditions as (\ref{arithm1}) and (\ref{arithm2}) are encoding elliptic unramified flows! More precisely, there are exactly $10$ such pairs (standard exercise):
\begin{eqnarray*}
& &(-2,-1)\leftrightarrow(-5,-1),\quad (-1,-2)\leftrightarrow(-5,-2),\quad(-2,-5)\leftrightarrow(-1,-5);\\
& &(-1,-3)\leftrightarrow(-3,-3),\quad(-3,-1)\circlearrowleft;\\
& &(-2,-2)\circlearrowleft.
\end{eqnarray*}
The symbol $``\leftrightarrow"$ means that the two pairs a linearly conjugate via (\ref{auto}), and $``\circlearrowleft"$ means that the flow is self conjugate. The pairs $(B,C)$ and $(C,B)$ are also linearly conjugate with the help of the involution $i(x,y)=(y,x)$. So, there are three equivalence classes of flows (shown as rows above), consisting of $6$, $3$ and $1$ pairs respectively; in each class any two flows are linearly conjugate, and we will show that all three arise form elliptic flows. We will choose such representatives: $(B,C)=(-2,-2)$, $(B,C)=(-3,-3)$, and $(B,C)=(-1,-2)$. Most of this paper deals with the first case, the fascinating vector field
\begin{eqnarray}
\varpi(x,y)=x^2-2xy,\quad\varrho(x,y)=y^2-2xy,
\label{specc}
\end{eqnarray}
which is the class on its own and thus it has exactly the $6$-fold symmetry: for every $\gamma\in\Sigma$,
\begin{eqnarray}
\gamma^{-1}\circ(\varpi,\varrho)\circ\gamma=(\varpi,\varrho);
\end{eqnarray}
for the definitions, see the property (SYMM), Subsection \ref{sub:main}. In the setting of \cite{alkauskas2}, all the tricks which ruled out other vector fields as arising from non-rational flows (as a rule, these tricks constituted in showing that corresponding flows have branching points, and rational flows, obviously, cannot have these) were not applicable in all these exceptional cases of $(B,C)$, and it was still not clear why the solution of (\ref{g-parteq}) in case (\ref{specc}), for example, which is exactly the function $\lambda(x,y)$ (see the Subsection \ref{sub:main}), cannot be a rational function. And it appears that it is not; since, if we put $f(z)=\lambda(z,-z)/z$, then, as the property (ELL), Subsection \ref{sub:main}, implies, one has
\begin{eqnarray*}
f(z)f(-z)[f(z)+f(-z)]\equiv2,
\end{eqnarray*}
and thus $(f(z),f(-z))$ parametrizes the elliptic curve $XY(X+Y)=2$ and $f(z)$ cannot be a rational function. In this case $\lambda(x,y)$ is not rational itself. Two other vector fields with $(B,C)=(-3,-3)$ and $(B,C)=(-1,-2)$, whose orbits are also elliptic curves, are examined in the Subsection \ref{two-other}.

\section{Auxiliary functions }Now we make an interlude and introduce functions which will be crucial in our study of the vector field (\ref{specc}). The material is just a collection of various facts from the literature; our contribution to this topic is an introduction of special elliptic functions $\s(u)$ and $\c(u)$ for which we prove Proposition \ref{basic-sc}. 
\subsection{The special hypergeometric series}
\label{hyper}
Let us introduce our first auxiliary function \cite{erdelyi}
\begin{eqnarray}
W(x)=\frac{1}{3}\int\limits_{0}^{1}\frac{\d t}{[(1-t)(1-xt)]^{2/3}}
=\,_{2}F_{1}\Big{(}\frac{2}{3},1;\frac{4}{3};x\Big{)},\quad -\infty<x<1.
\label{int}
\end{eqnarray}
It satisfies $W(0)=1$, and the linear ODE
\begin{eqnarray}
3x(1-x)W'(x)+(1-2x)W(x)=1.\label{prinz}
\end{eqnarray}
The derivative of this gives the second order ODE
\begin{eqnarray*}
3x(1-x)W''(x)+(4-8x)W'(x)-2W(x)=0,
\end{eqnarray*}
which coincided with Euler's hypergeometric differential equation for $(a,b;c)=(\frac{2}{3},1;\frac{4}{3})$. Since $c=2a$, the function $W(x)$ is invariant under Pfaff's transformation and thus it satisfies the functional equation \cite{erdelyi}
\begin{eqnarray*}
W(x)=\frac{1}{1-x}W\Big{(}\frac{x}{x-1}\Big{)},\quad x<0;
\end{eqnarray*}
this can be verified easily using the integral representation (\ref{int}). Of course, this functional equation is satisfied by all hypergeometric functions of the form $\,_{2}F_{1}(a,1;2a;x)$. For example, when $a=1$, this hypergeometric function reduces to $-x^{-1}\log(1-x)$. The change of variables $1-xt=(1-x)T$ in (\ref{int}) gives
\begin{eqnarray}
W(x)=\frac{1}{3[x(1-x)]^{1/3}}\int\limits_{1}^{\frac{1}{1-x}}\frac{\d t}{[t(t-1)]^{2/3}},\quad x<1.\label{everyell}
\end{eqnarray}
%Thus,
%\begin{eqnarray*}
%\frac{1}{3[x(1-x)]^{1/3}}J\Big{(}\frac{1}{x}\Big{)}=W(x).
%\end{eqnarray*}
The appearance of the symmetric group $S_{3}$ in our investigations - see (SYMM), the Subsection \ref{sub:main} - can be explained from several points of view; here is one of them.\\
 
 The special case of Kummer's theory for hypergeometric series \cite{erdelyi} is the following fact, which is easily checked in our case: the differential equation (\ref{prinz}) has the following three solutions:
 \begin{center}
\begin{tabular}{l  c  l}
$W_{0}(x)=W(x)$& for $-\infty<x<1$,\quad & $W_{0}(0)=1$;\\
$W_{1}(x)=-W(1-x)$& for $0<x<\infty$,\quad & $W_{1}(1)=-1$;\\
$W_{\infty}(x)=\displaystyle{\frac{1}{x}W\Big{(}\frac{1}{x}\Big{)}}$&
 for $x\in(-\infty,0)\cup(1,\infty)$,\quad & $W_{\infty}(\infty)=0$.\\
\end{tabular}
\end{center}
This can be treated as the complete description of the differential equation (\ref{prinz}) on the line $P^{1}(\mathbb{R})$. The three singular points $0,1,\infty$ divide this circle into three parts. For each interval $(a,b)$, $a,b\in\{0,1,\infty\}$, $a\neq b$, the general solution of (\ref{prinz}) in the interval $(a,b)$ with a floating boundary condition is given by
\begin{eqnarray*}
pW_{a}(x)+(1-p)W_{b}(x),\quad p\in\mathbb{R}.
\end{eqnarray*}
The symmetric group $S_{3}$ acts on the set $\{0,1,\infty\}$ by permutations. This corresponds to the action of M\"{o}bius transformations on $W_{0}$, $W_{1}$ and $W_{\infty}$ as follows.
\begin{eqnarray*}
f(x)&\mapsto&\frac{1}{x}f\Big{(}\frac{1}{x}\Big{)}\text{ corresponds to }(0\,\infty)(1),\\
f(x)&\mapsto& -f(1-x)\text{ corresponds to }(0\,1)(\infty),\\
f(x)&\mapsto& \frac{1}{1-x}f\Big{(}\frac{x}{x-1}\Big{)}\text{ corresponds to }(1\,\infty)(0).
\end{eqnarray*}
The first entry, for instance, means the following: the map under consideration interchanges $W_{0}$ and $W_{\infty}$ but leaves the function $W_{1}$ intact. Other two elements of $S_{3}$ (the last one is the identity) are obtained from the above. So, for example, the cycle $(0\,1\,\infty)=(0\,\infty)(1)\cdot(1\,\infty)(0)$ corresponds to the transformation
\begin{eqnarray*}
f(x)&\mapsto& -\frac{1}{x}f\Big{(}\frac{x-1}{x}\Big{)}.
\end{eqnarray*}
These correspondences should find their analogues in the setting of the Subsection \ref{dim}, at least in case of symmetric groups $S_{N}$, $N\geq 4$.
\subsection{The Dixonian elliptic functions}
\label{sub-dix}
For the general theory of elliptic functions we may refer to the classical book (in Russian) \cite{ahiezer}. Let $\omega=e^{2\pi i/3}$. The functions $\sm(u)$, $\cm(u)$ were introduced by Dixon \cite{dixon} as a pair of functions which parematrize the Fermat cubic $X^3+Y^3=1$. These are in fact special elliptic functions satisfying the following
\begin{prop}{\cite{dixon,flajolet-c}} The Dixonian elliptic functions have these properties
\begin{eqnarray*}
\sm(0)=0,\cm(0)=1,&\quad&\sm^{3}(u)+\cm^{3}(u)\equiv 1,\\
\sm'(u)=\cm^{2}(u),&\quad&\cm'(u)=-\sm^{2}(u),\\
\sm(-u)=-\frac{\sm(u)}{\cm(u)},&\quad&\cm(-u)=\frac{1}{\cm(u)},\\
\sm(\omega u)=\omega\sm(u),\cm(\omega u)=\cm(u),&\quad&
\sm\Big{(}\frac{\pi_{3}}{3}-u\Big{)}=\cm(u),\\
{\rm sm}(u+v)&=&\frac{s_{1}c^{2}_{2}+s_{2}c^{2}_{1}-s^{2}_{1}s^{2}_{2}c_{1}c_{2}}{1-s^{3}_{1}s^{3}_{2}},\\
{\rm cm}(u+v)&=&\frac{c_{1}c_{2}-s_{1}s_{2}(s_{1}c^2_{2}+s_{2}c^2_{1})}{1-s^{3}_{1}s^{3}_{2}},\\
\text{here }s_{1}=\sm(u),c_{1}=\cm(u),& &s_{2}=\sm(v),c_{2}=\cm(v),
\end{eqnarray*}
\begin{eqnarray*}
\sm(u)&=&u-4\frac{u^4}{4!}+160\frac{u^7}{7!}-20800\frac{u^{10}}{10!}+647680\frac{u^3}{13!}+\cdots,\\
\cm(u)&=&1-2\frac{u^3}{3!}+40\frac{u^6}{6!}-3680\frac{u^{9}}{9!}
+880000\frac{u^{12}}{12!}+\cdots,\\
\cm\Big{(}x^{1/3}(x-1)^{1/3}W(x)\Big{)}&=&\frac{1}{(1-x)^{1/3}},\quad
\sm\Big{(}x^{1/3}(x-1)^{1/3}W(x)\Big{)}=\frac{(-x)^{1/3}}{(1-x)^{1/3}};
\end{eqnarray*}
\label{dixonian}
the last holds for $-\infty<x<1$.
Here the contant $\pi_{3}$ (the period, according to the notion of Zagier-Kontsevich) is given by
\begin{eqnarray*}
\pi_{3}=B\Big{(}\frac{1}{3},\frac{1}{3}\Big{)}=
\frac{\Gamma(\frac{1}{3})^{2}}{\Gamma(\frac{2}{3})}
=\frac{\sqrt{3}}{2\pi}\Gamma\Big{(}\frac{1}{3}\Big{)}^{3}=5.299916250856_{+},\text{ and }
\Pi=\frac{\pi^{3}_{3}}{27}=5.513701576710_{+}.
\end{eqnarray*}
\end{prop}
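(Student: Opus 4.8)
The plan is to reduce every item to two facts about the pair $(\sm,\cm)$: that it parametrizes the Fermat cubic, and that it solves a first-order autonomous ODE system with unique solutions. First I would \emph{define} $\sm$ as the local inverse near $u=0$ of the abelian integral $u=\int_{0}^{s}(1-t^{3})^{-2/3}\d t$ on $X^{3}+Y^{3}=1$, and set $\cm(u)=(1-\sm^{3}(u))^{1/3}$ with the branch fixed by $\cm(0)=1$; then $\sm^{3}+\cm^{3}\equiv 1$ and $\sm(0)=0,\ \cm(0)=1$ are immediate. Differentiating the integral gives $\sm'=\cm^{2}$, and differentiating the cubic relation forces $\cm'=-\sm^{2}$, so $(\sm,\cm)$ solves $y_{1}'=y_{2}^{2},\ y_{2}'=-y_{1}^{2}$. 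From here uniqueness does almost all the work.

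Each functional identity is then obtained by exhibiting an auxiliary pair solving the \emph{same} system with the \emph{same} data at $u=0$. For negation one checks, using only $\sm'=\cm^{2}$, $\cm'=-\sm^{2}$ and the cubic relation, that $(-\sm/\cm,\,1/\cm)$ solves $y_{1}'=-y_{2}^{2},\ y_{2}'=y_{1}^{2}$ — the very system governing $(\sm(-u),\cm(-u))$ — giving $\sm(-u)=-\sm(u)/\cm(u)$, $\cm(-u)=1/\cm(u)$. For the rotation, $(\omega^{-1}\sm(\omega u),\cm(\omega u))$ solves the original system (here $\omega^{3}=1$ enters), giving $\sm(\omega u)=\omega\sm(u)$, $\cm(\omega u)=\cm(u)$. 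For the half-period reflection I would first compute the real period: the substitution $s=t^{3}$ sends $\int_{0}^{1}(1-t^{3})^{-2/3}\d t$ to $\tfrac13 B(\tfrac13,\tfrac13)=\tfrac{\pi_{3}}{3}$, so $\sm(\pi_{3}/3)=1$, $\cm(\pi_{3}/3)=0$; then $(\sm(\tfrac{\pi_{3}}{3}-u),\cm(\tfrac{\pi_{3}}{3}-u))$ and $(\cm(u),\sm(u))$ both solve $y_{1}'=-y_{2}^{2},\ y_{2}'=y_{1}^{2}$ with data $(1,0)$, whence $\sm(\tfrac{\pi_{3}}{3}-u)=\cm(u)$. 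The Taylor coefficients fall out recursively by feeding $\sm(0)=0,\cm(0)=1$ into $\sm'=\cm^{2}$, $\cm'=-\sm^{2}$ and solving order by order.

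For the addition theorems I would let $\Phi,\Psi$ denote the proposed rational expressions in $s_{1}=\sm(u),c_{1}=\cm(u),s_{2}=\sm(v),c_{2}=\cm(v)$, differentiate each in $u$ and in $v$ via the derivative formulas, and reduce the two outputs modulo $s_{i}^{3}+c_{i}^{3}=1$ to a common expression; verifying $\partial_{u}\Phi=\partial_{v}\Phi$ (and likewise for $\Psi$) shows $\Phi,\Psi$ depend only on $u+v$, and the specialization $v=0$, where $\Phi$ collapses to $s_{1}$ and $\Psi$ to $c_{1}$, identifies them with $\sm(u+v),\cm(u+v)$. Alternatively one may invoke the classical algebraic addition theorem for elliptic functions and pin the rational function down by matching its divisor on the product of two copies of the cubic; either route is self-contained.

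Finally, for the dictionary with $W$ I would set $u(x)=x^{1/3}(x-1)^{1/3}W(x)$ and use the ODE (\ref{prinz}) to collapse $\tfrac{\d u}{\d x}$ to $-\tfrac13(x(x-1))^{-2/3}$; then $c(x)=(1-x)^{-1/3}$ and $s(x)=(-x)^{1/3}(1-x)^{-1/3}$ satisfy $s^{3}+c^{3}=1$ and, re-parametrized by $u$, obey $\tfrac{\d s}{\d u}=c^{2}$, $\tfrac{\d c}{\d u}=-s^{2}$ with $u(0)=0,s(0)=0,c(0)=1$, so ODE uniqueness near $x=0$ together with analytic continuation on $(-\infty,1)$ gives $\sm(u(x))=s(x)$, $\cm(u(x))=c(x)$. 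I expect the addition-formula reductions to be the main obstacle — they are genuine polynomial identities in the coordinate ring of $(\text{Fermat cubic})\times(\text{Fermat cubic})$ — with a secondary nuisance in pinning down the fractional-power branches in the last formula (equivalently, reconciling it with the representation (\ref{everyell})); everything else is a disciplined application of ODE uniqueness.
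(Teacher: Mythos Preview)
The paper does not actually prove this proposition: it is stated with citations to \cite{dixon,flajolet-c} and no argument is given beyond those references. Your proposal therefore goes well beyond what the paper offers, and the route you outline---defining $\sm$ as the inverse of $\int_0^s(1-t^3)^{-2/3}\,\d t$, reading off the ODE system $\sm'=\cm^2$, $\cm'=-\sm^2$, and then deriving every functional identity by exhibiting an auxiliary pair that solves the same system with the same initial data---is a standard and correct way to establish these facts from scratch. Your computation of $\d u/\d x$ using (\ref{prinz}) is right, and your warning about the addition-formula reductions and the branch choices in the $W$ dictionary correctly identifies the two places where genuine work (rather than bookkeeping) is needed.
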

The last two constants are of great importance in the current paper. The can be numerically calculated by the fast converging series
\begin{eqnarray*}
\pi^{6}_{3}=8\pi^{6}\Bigg{(}1-\sum\limits_{n=1}^{\infty}
\frac{504n^5}{(-1)^ne^{\sqrt{3}\pi n}-1}\Bigg{)}=820.824437079556_{+}.
\end{eqnarray*}
The full lattice of periods for both $\sm(u)$ and $\cm(u)$ is given by $\mathbb{Z}\pi_{3}\oplus\mathbb{Z}\pi_{3}\omega$. Let
$\mathcal{F}=\{\pi_{3}s+\pi_{3}\omega t:s,t\in[0,1)\}$ be the fundamental parallelogram. Consider $9$ special points
\begin{eqnarray*}
q_{a,b}=\frac{a\pi_{3}}{3}+\frac{b\pi_{3}\omega}{3},\quad 0\leq a,b\leq 2.
\end{eqnarray*}
In terms of the theory of elliptic functions, both $\sm(u)$ and $\cm(u)$ are order $3$ elliptic functions, and so each of them attains every value in $\mathbb{C}\cup\{\infty\}$ in $\mathcal{F}$ exactly thrice, counting multiplicities. The simple poles of both $\sm(u)$ and $\cm(u)$ are $q_{2,0}$, $q_{1,1}$ and $q_{0,2}$. The function $\sm(u)$ has simple zeros at $q_{0,0}$, $q_{2,1}$ and $q_{1,2}$, while the values at $q_{1,0}$, $q_{0,1}$ and $q_{2,2}$ are, respectively, $1,\omega,\omega^2$.
Likewise, $\cm(u)$ has simple zeros at $q_{1,0}$, $q_{0,1}$ and $q_{2,2}$, and the values at $q_{0,0}$, $q_{1,2}$ and $q_{2,1}$ are, respectively, $1$ (triple value), $\omega$, $\omega^2$. These properties follow from Proposition \ref{dixonian}. The authors in \cite{flajolet-c}, for the convenience reasons, introduce the hyperbolic versions of these functions, given by ${\rm smh}(u)=-{\rm sm}(-u)$, ${\rm cmh}(u)={\rm cm}(-u)$. These functions parametrize the ``Fermat hyperbola" $y^{3}-x^{3}=1$. For our purposes, we will need yet another version of these functions, and here we introduce
\begin{eqnarray*}
\s(u)=-\frac{{\rm sm}^{2}(u)}{{\rm cm}(u)},\quad
\c(u)=\frac{{\rm cm}^{2}(u)}{{\rm sm}(u)}.
\end{eqnarray*}
These are order $6$ elliptic function with the same period lattice $\mathbb{Z}\pi_{3}\oplus\mathbb{Z}\pi_{3}\omega$. Moreover, we have
\begin{prop}The functions $\s(u)$ and $\c(u)$ have these properties:
\begin{eqnarray*}
1&\equiv&\s(u)\c(u)[\s(u)-\c(u)],\\
\s'(u)&=&-\s^{2}(u)+2\s(u)\c(u),\\
\c'(u)&=&-\c^{2}(u)+2\s(u)\c(u),\\
\s(\omega u)&=&\omega^{2}\s(u),\c(\omega u)=\omega^{2}\c(u),\\
\s(-z)&=&\s(z),\c(-z)=\frac{1}{\s(z)\c(z)},\\
\c\Big{(}\frac{\pi_{3}}{3}-u\Big{)}&=&-\s(u),\\
\s(u+v)&=&\frac{(C_{1}+C_{2}-S_{1}S_{2}C_{1}C_{2})^{2}S_{1}S_{2}}
{(1-S_{1}^{2}S_{2}^{2}C_{1}C_{2})(S_{1}S_{2}C_{1}+S_{1}S_{2}C_{2}-1)},\\
\c(u+v)&=&\frac{(1-S_{1}S_{2}C_{1}-S_{1}S_{2}C_{2})^2C_{1}C_{2}}
{(1-S_{1}^{2}S_{2}^{2}C_{1}C_{2})(C_{1}+C_{2}-S_{1}S_{2}C_{1}C_{2})};\\
\text{here }S_{1}&=&\s(u),\quad C_{1}=\c(u),\\
            S_{2}&=&\s(v),\quad C_{2}=\c(v).
\end{eqnarray*}
\label{basic-sc}
\end{prop}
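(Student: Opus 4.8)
The plan is to derive every property of $\s(u)$ and $\c(u)$ directly from the corresponding property of the Dixonian functions $\sm(u)$, $\cm(u)$ listed in Proposition \ref{dixonian}, using only the defining substitution $\s=-\sm^{2}/\cm$, $\c=\cm^{2}/\sm$. First I would record two convenient auxiliary identities that follow instantly from $\sm^{3}+\cm^{3}\equiv1$: namely $\s\c=-\sm\cm$ and $\s^{3}=-\sm^{6}/\cm^{3}=-\sm^{6}/(1-\sm^{3})$, together with $\c^{3}=\cm^{6}/\sm^{3}=(1-\cm^{3})^{2}/\cm^{3}\cdot(\text{...})$; more to the point, $\c-\s=\cm^{2}/\sm+\sm^{2}/\cm=(\cm^{3}+\sm^{3})/(\sm\cm)=1/(\sm\cm)$. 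Hence $\s\c(\s-\c)=(-\sm\cm)\cdot(-1/(\sm\cm))=1$, which is the first asserted identity. For the differential equations I would differentiate $\s=-\sm^{2}/\cm$ using $\sm'=\cm^{2}$, $\cm'=-\sm^{2}$: $\s'=-(2\sm\cm^{2}\cdot\cm-\sm^{2}\cdot(-\sm^{2}))/\cm^{2}=-2\sm\cm+\sm^{4}/\cm^{2}$. Now $-2\sm\cm=2\s\c$ and $\sm^{4}/\cm^{2}=\s^{2}$... wait, $\s^{2}=\sm^{4}/\cm^{2}$, so indeed $\s'=-\s^{2}+2\s\c$; the computation for $\c'$ is symmetric (swap the roles and signs), giving $\c'=-\c^{2}+2\s\c$.

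Next I would handle the two finite symmetries. From $\sm(\omega u)=\omega\sm(u)$, $\cm(\omega u)=\cm(u)$ one gets $\s(\omega u)=-\omega^{2}\sm^{2}(u)/\cm(u)=\omega^{2}\s(u)$ and $\c(\omega u)=\cm^{2}(u)/(\omega\sm(u))=\omega^{2}\c(u)$ since $\omega^{-1}=\omega^{2}$. For the reflection $u\mapsto-u$, substitute $\sm(-u)=-\sm/\cm$, $\cm(-u)=1/\cm$ into the definitions: $\s(-u)=-(\sm^{2}/\cm^{2})/(1/\cm)=-\sm^{2}/\cm=\s(u)$, and $\c(-u)=(1/\cm^{2})/(-\sm/\cm)=-1/(\sm\cm)$, which one must then recognize as $1/(\s(u)\c(u))$ using the already-proved relation $\s\c=-\sm\cm$. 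The quarter-period shift $\s(\frac{\pi_{3}}{3}-u)$, $\c(\frac{\pi_{3}}{3}-u)$ follows the same way from $\sm(\frac{\pi_{3}}{3}-u)=\cm(u)$, which forces (via the cubic relation) $\cm(\frac{\pi_{3}}{3}-u)=\sm(u)$, so $\c(\frac{\pi_{3}}{3}-u)=\sm^{2}(u)/\cm(u)=-\s(u)$, and similarly for the other.

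The genuine work is the two addition formulas, and that is where I expect the main obstacle to lie. The plan is brute-force: start from the Dixonian addition formulas for $\sm(u+v)$ and $\cm(u+v)$ in Proposition \ref{dixonian}, form $\s(u+v)=-\sm^{2}(u+v)/\cm(u+v)$, and substitute. This yields a large rational expression in $s_{1},c_{1},s_{2},c_{2}$; the task is to re-express it in terms of $S_{i}=-s_{i}^{2}/c_{i}$, $C_{i}=c_{i}^{2}/s_{i}$. The key translation dictionary is $S_{i}C_{i}=-s_{i}c_{i}$, $S_{i}/C_{i}=-s_{i}^{3}/c_{i}^{3}$, $S_{i}^{2}S_{i}^{2}C_i\ldots$ — in particular $S_{i}^{2}C_{i}=s_{i}^{4}/c_{i}^{2}\cdot c_{i}^{2}/s_{i}=s_{i}^{3}\cdot(\text{...})$, so monomials like $s_{i}^{3}$ become $-S_{i}^{2}C_{i}\cdot(\ldots)$; one checks $s_{i}^{3}=S_{i}/C_{i}\cdot\frac{-1}{1}\cdot(\ldots)$ more carefully by noting $s_i^3+c_i^3=1$ together with $s_ic_i=-S_iC_i$ determines $s_i^3,c_i^3$ as roots of $T^2-T+(-S_iC_i)^3=0$... this is the delicate bookkeeping step. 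I would organize it by clearing denominators on both sides, multiplying through by appropriate powers of $s_{1}c_{1}s_{2}c_{2}$, and verifying the resulting polynomial identity either by the substitution $s_i\mapsto$ formal parameters subject to $s_i^3+c_i^3=1$ (working in the quotient ring $\mathbb{C}[s_1,c_1,s_2,c_2]/(s_1^3+c_1^3-1,\,s_2^3+c_2^3-1)$, exactly the kind of quotient-ring computation the abstract advertises), or — safer as a check — numerically at several random points on the Fermat cubics. Since both sides of each claimed formula are elliptic in $u$ (for fixed $v$) with the same period lattice, an alternative and cleaner route for rigor is to verify that the right-hand side, as a function of $u$, has the same poles and principal parts and the same value at one point as $\s(u+v)$; matching order-$6$ elliptic functions requires comparing finitely many Laurent coefficients. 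I would present the substitution computation as the main argument and remark that the elliptic-function uniqueness argument gives an independent confirmation. The hard part is purely the algebraic simplification of the addition formulas; everything else is a short direct substitution.
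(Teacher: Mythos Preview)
Your approach is exactly the paper's: the entire proof there reads ``These are verified using Proposition \ref{dixonian},'' i.e., direct substitution of $\s=-\sm^{2}/\cm$, $\c=\cm^{2}/\sm$ into the Dixonian identities, which is precisely what you outline (including the quotient-ring/brute-force route for the addition formulas). One small arithmetic slip to fix: in your derivative computation the intermediate expression should be $-2\sm\cm-\sm^{4}/\cm^{2}$ (not $+\sm^{4}/\cm^{2}$), after which $2\s\c=-2\sm\cm$ and $\s^{2}=\sm^{4}/\cm^{2}$ indeed give $\s'=-\s^{2}+2\s\c$.
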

These are verified using Proposition \ref{dixonian}.

\section{The results}
\subsection{Classification}The first main result of this paper is the complete solution of the Problem \ref{main-prob}.
\begin{thm}Let $\phi(\m{x})=u(x,y)\bl v(x,y)$ be a smooth projective flow such that (\ref{bound}) holds, and its vector field, defined by (\ref{vec}), is a pair of $2$-homogenic rational functions. Suppose that both $u$ and $v$ are defined on $\mathbb{C}^{2}$, except for a countable set of isolated curves each, where they might have poles, and that $u,v$ in their definition domains are single-valued analytic functions. Then there exists a $1$-BIR $\ell$, such that $\ell^{-1}\circ\phi\circ\ell(\m{x})$ is one of the following canonic projective flows:
\begin{itemize}
\item[1)] $x\bl y$;
\item[2)] $\phi_{N}$ for $N\in\mathbb{N}_{0}$, the level $N$ flow, whose orbits are given by $xy^{N-1}=c$; only in the latter two cases the flow is rational;
\item[3)] $\phi^{{\rm e}}(\m{x})$, the level $1$ flow whose orbits are given by $x+y=c$; it is conjugate to the flow $\phi^{{\rm t}}(\m{x})$, also the level $1$ flow and also with orbits given by $x+y=c$ (for these two, see Proposition \ref{prop1});
\item[4)] $\Lambda(\m{x})=\lambda(x,y)\bl \lambda(y,x)$, the level $3$ flow whose vector field is $x^2-2xy\bl y^2-2xy$, orbits are given by $xy(x-y)=c$, and this flow is algebraically expressable in terms of Dixonian elliptic functions (see Theorem \ref{thm2});
\item[5)] $\Psi(\m{x})=\psi(x,y)\bl\psi(y,x)$, the level $4$ flow whose vector field is $x^2-3xy\bl y^2-3xy$, orbits are given by $xy(x-y)^2=c$, and this flow is expressable in terms of lemniscatic elliptic functions (with quadratic period lattice);
\item[6)] $\Delta(\m{x})=\alpha(x,y)\bl\beta(x,y)$, the level $6$ flow whose vector field is $x^2-xy\bl y^2-2xy$, orbits are given by $(3x-2y)x^3y^2=c$, and this flow is expressable in terms of Dixonian elliptic functions  again (for the last two, see the Subsection \ref{two-other}).
\end{itemize}
\label{thm1}
\end{thm}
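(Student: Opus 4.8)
The plan is to combine the reduction machinery recalled in Subsection \ref{sub-quadratic} with the auxiliary special-function apparatus of Section 2, exhibiting the six families as the only survivors of a systematic elimination. First I would invoke the step-by-step lowering of the degree $d$ of the common denominator of the vector field $\varpi\bl\varrho$: by the main construction of \cite{alkauskas2}, as long as $\varpi$ and $\varrho$ are not proportional one finds a $1$-BIR $\ell$ reducing $d$, so after finitely many conjugations we reach one of two situations — either the two coordinates are proportional (equivalently, after linear conjugation, $\varrho\equiv0$, whence $v(x,y)=y$ and $\varpi$ is an arbitrary $2$-homogenic rational function), or $\varpi,\varrho$ are both quadratic forms. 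I would handle the first branch by the independent analysis promised for Subsection \ref{sub-zero}: solving the PDE (\ref{g-parteq}) for $u$ with $\varrho\equiv0$ directly, showing that the single-valuedness hypothesis forces $\varpi$ into one of the normal forms $xy$ (giving $\phi^{\mathrm e}$, via the addition formula for $\exp$) or $x^2+y^2$ (giving $\phi^{\mathrm t}$, via the addition formula for $\tan$), while all other $\varpi$ either produce branching or reduce, under a further $1$-BIR, to a quadratic-form vector field already on the list; here the conjugacy $\ell^{-1}\circ\phi^{\mathrm{exp}}\circ\ell$ displayed after Proposition \ref{prop1} identifies items (3).

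For the quadratic-form branch I would follow (\cite{alkauskas2}, Step III) verbatim: if $x\varrho-y\varpi$ is a cube of a linear form the flow has a $\log(y+1)$-type branch point and is excluded by Problem \ref{main-prob}; otherwise a linear conjugation brings the field to $\varpi=ax^2+bxy$, $\varrho=cxy+dy^2$, and the cases $b=0$, $c=0$, $d=0$, $a=0$ again yield either rational flows ($\phi_N$, item (2), orbits $xy^{N-1}=c$) or ramifications of type $(y+1)^r$ or $\log(y+1)$. In the generic sub-case one normalizes to $\varpi=x^2+Bxy$, $\varrho=Cxy+y^2$, and the absence of the $(1-y)^{-B}$ and $(1-x)^{-C}$ ramifications forces $B,C\in\mathbb{Z}$; applying the autoconjugation (\ref{auto}) and demanding the conjugate field be unramified too forces $\frac{B+C-2}{BC-1}\in\mathbb{Z}$. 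The elementary Diophantine analysis (already asserted to be a "standard exercise") then yields exactly the ten pairs listed, falling into the three conjugacy classes with representatives $(B,C)=(-2,-2)$, $(-3,-3)$, $(-1,-2)$ — the levels $3,4,6$ — together with the degenerate cases $B=1$ ($\Rightarrow C=1$), which give level $0$, and $B+C=2$, which gives rational or algebraically-ramified flows.

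It remains to show that the three exceptional vector fields genuinely come from single-valued, non-singular flows (so they are not spuriously excluded) and to pin down the analytic form claimed in items (4)--(6). For $(B,C)=(-2,-2)$, i.e. the field (\ref{specc}), I would solve (\ref{g-parteq}) by the method of characteristics: the orbit invariant is $\mathscr{W}=xy(x-y)$, the characteristic ODE along an orbit reduces to a first-order ODE that the integral representation (\ref{everyell}) of $W(x)$ solves, and the boundary condition (\ref{bound}) pins down the constant; the resulting $\lambda(x,y)$ is then expressed through $\s,\c$ (equivalently through $\sm,\cm$ and $W$) using Proposition \ref{basic-sc} and the last identities of Proposition \ref{dixonian}. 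Single-valuedness of $\lambda(x,y)\bl\lambda(y,x)$ off a union of curves follows because $\s,\c$ are genuine (meromorphic, hence single-valued) elliptic functions and the argument is a single-valued algebraic function of $(x,y)$ on the complement of the orbit $xy(x-y)=0$ and the polar curves. The addition-formula content — that the PrTE (\ref{funkk}) for $\Lambda$ is equivalent to the addition theorem for $\s,\c$ — is checked by substituting the closed form into (\ref{funkk}) and matching with Proposition \ref{basic-sc}; this is the assertion refined in Theorem \ref{thm2}. The cases $(-3,-3)$ (lemniscatic, quadratic lattice) and $(-1,-2)$ (Dixonian again, orbits $(3x-2y)x^3y^2=c$) are treated identically with the corresponding classical elliptic functions in place of $\s,\c$, deferred to Subsection \ref{two-other}. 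The main obstacle I anticipate is precisely this last part: proving that each exceptional quadratic field actually integrates to an \emph{unramified} flow rather than being discardable, since the elimination arguments of \cite{alkauskas2} that ruled out other fields do not apply here — one must produce the explicit elliptic parametrization and verify both the functional equation and the single-valuedness by hand, which is where Section 2's preparation of $W$, $\sm$, $\cm$, $\s$, $\c$ does the real work.
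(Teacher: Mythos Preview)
Your overall strategy matches the paper's: reduce via $1$-BIR to either $\varrho\equiv 0$ or a pair of quadratic forms (Subsection~\ref{sub-quadratic}), handle the first branch via the integral analysis of Subsection~\ref{sub-zero}, impose the arithmetic conditions (\ref{arithm1})--(\ref{arithm2}) on the second to isolate the ten exceptional pairs, and then construct the three elliptic flows explicitly by the method of characteristics, as done in Subsection~\ref{subsol} and Subsection~\ref{two-other}.

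There is, however, a genuine error in your single-valuedness argument for the elliptic flows, and this is not a minor point since single-valuedness is the entire content of the classification. You claim that ``the argument is a single-valued algebraic function of $(x,y)$ on the complement of the orbit $xy(x-y)=0$,'' but this is false: the argument is $\v=[xy(x-y)]^{1/3}$, a cube root, which is \emph{not} single-valued anywhere off that locus. The actual mechanism (spelled out just after Theorem~\ref{thm2}) is that the Dixonian functions satisfy $\sm(\omega u)=\omega\sm(u)$ and $\cm(\omega u)=\cm(u)$ (Proposition~\ref{dixonian}), so both $\cm(\v)$ and $\sm(\v)\v^{-1}$ are invariant under $\v\mapsto\omega\v$ and hence are honest single-valued functions of $\v^{3}=xy(x-y)$; one then rewrites the formula for $\lambda$ so that only these invariant combinations appear. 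Without this $\omega$-symmetry step the proof collapses. The same issue recurs at levels $4$ and $6$: the auxiliary functions $p,q$ in Subsection~\ref{two-other} are built with Taylor series supported on powers $u^{4n-1}$ precisely so that $p(\v)\v$ and $q(\v)\v$ depend only on $\v^{4}$, and analogously for level $6$. This is why the lattice must be square (lemniscatic) or hexagonal (Dixonian) --- an arbitrary elliptic function would not absorb the root-of-unity ambiguity, and the resulting flow would be ramified.

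A smaller point: in the $\varrho\equiv 0$ branch you say other $\varpi$ ``reduce, under a further $1$-BIR, to a quadratic-form vector field.'' The paper does not do this; Subsection~\ref{sub-zero} argues directly from the integral equation $\int_{q(x)}^{x}\varpi(1,t)^{-1}\,\d t=1$ that any zero of the denominator of $\varpi(1,t)$ forces the associated primitive to have a higher-order zero and hence a locally non-injective (multi-valued) inverse, so $\varpi(1,t)$ must be a polynomial; the substitution $t\mapsto 1/t$ then bounds its degree by~$2$.
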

Our main concern of this paper is the case 4); the last two cases will be covered in the Subsection \ref{two-other}. The case 4) is particularly interesting since it has an additional $6$-fold symmetry, and now we will concentrate on it.
\subsection{The $S_{3}$-superflow} (For the explanation of the title, see the Subsection \ref{dim}).
\label{sub:main}
So, we investigate the fascinating function $\Lambda(x,y)=\lambda(x,y)\bl\lambda(y,x)$ which possesses these main properties.
\begin{itemize}
%\item{(AUT). }$\Lambda(x,y)=(\lambda(x,y),\lambda(y,x))$, and
%$\Lambda:\mathbb{E}\mapsto\mathbb{E}$ is an automorphism.
\item{(PDE). }The first coordinate satisfies the partial differential equation
\begin{eqnarray}
\lambda_{x}(x,y)(x^2-2xy-x)+\lambda_{y}(x,y)(y^2-2xy-y)=-\lambda(x,y)\label{parteq}
\end{eqnarray}
with the boundary condition
\begin{eqnarray}
\lim\limits_{z\rightarrow 0}\frac{\lambda(xz,yz)}{z}=x.\label{boundd}
\end{eqnarray}
\item{(FLOW). }The function $\Lambda$ satisfies the iterative functional equation
\begin{eqnarray*}
(1-z)\Lambda(\m{x})=\Lambda\Big{(}\Lambda(\m{x}z)\frac{1-z}{z}\Big{)},\quad\m{x}=(x,y)\in\mathbb{C}^{2},z\in\mathbb{C}.\label{funk}
\end{eqnarray*}
%It is exactly this equation which is our main contribution to the topic of elliptic curves and hypergeometric series.
\item{(SYMM). }If $\gamma=\bigl( \begin{smallmatrix} a&b\\ c&d \end{smallmatrix} \bigr)$, let $\gamma(x,y)=(ax+by,cx+dy)$. Consider the $6$ element subgroup of ${\rm GL}_{2}(\mathbb{R})$, call it $\Sigma\cong S_{3}$, whose elements are
\begin{eqnarray*}
i=\left(\begin{array}{cc}1 & 0 \\0 & 1 \\ \end{array}\right),\quad
\sigma=\left(\begin{array}{cc}0 & 1 \\1 & 0 \\ \end{array}\right),\quad
\tau=\left(\begin{array}{cc}1 & -1 \\0 & -1 \\ \end{array}\right),
\end{eqnarray*}
and
\begin{eqnarray*}
\sigma\tau\sigma=\left(\begin{array}{cc}-1 & 0 \\-1 & 1 \\ \end{array}\right),\quad
\sigma\tau=\left(\begin{array}{cc}0 & -1 \\1 & -1 \\ \end{array}\right),\quad
\tau\sigma=\left(\begin{array}{cc}-1 & 1 \\-1 & 0 \\ \end{array}\right);
\end{eqnarray*}
then $(\sigma\tau)^{3}=(\tau\sigma)^{3}=i$, $\sigma^2=\tau^2=(\sigma\tau\sigma)^2=i$.
The function $\Lambda(x,y)$ possesses the $6$-fold symmetry: for every $\gamma\in\Sigma$, we have
\begin{eqnarray*}
\gamma^{-1}\circ\Lambda\circ\gamma(x,y)=\Lambda(x,y).
\end{eqnarray*}
Of course, involutions $\sigma$ and $\tau$ generate the whole group $\Sigma$, so only two of these invariance properties are independent. The invariance under $\sigma$ tells us that $\Lambda(x,y)=(\lambda(x,y),\lambda(y,x))$, that is, the second coordinate is just the flip of the first, and the invariance under $\tau$ implies the identities
\begin{eqnarray}
\left\{\begin{array}{c@{\qquad}l}
\lambda(x,y)+\lambda(-y,x-y)+\lambda(y-x,-x)=0,\\
\lambda(x,y)+\lambda(-x,y-x)=0.
\label{period}\end{array}\right.
\end{eqnarray}
\item{(ELL). }In case $x,y$ are fixed, $xy(x-y)\neq 0$, the pair of functions
\begin{eqnarray*}
(X(z),Y(z))=\Big{(}\frac{\lambda(xz,yz)}{z},
\frac{\lambda(yz,xz)}{z}\Big{)}=(\lambda^{z}(x,y),\lambda^{z}(y,x))
\end{eqnarray*}
parametrize the elliptic curve $XY(X-Y)=xy(x-y)$;
thus,
\begin{eqnarray*}
\frac{\lambda(xz,yz)}{z}\cdot\frac{\lambda(yz,xz)}{z}\cdot\Big{(}\frac{\lambda(xz,yz)}{z}-\frac{\lambda(yz,xz)}{z}\Big{)}\equiv xy(x-y),\quad x,y,z\in\mathbb{C}.
\end{eqnarray*}
It turns out that three exceptional lines $y=0,x-y=0$, and $x=0$ correspond to three ramification points of the algebraic function $[z(1-z)]^{-2/3}$; namely, $z=0,1$, and $\infty$.
\end{itemize}
\begin{figure}
\epsfig{file=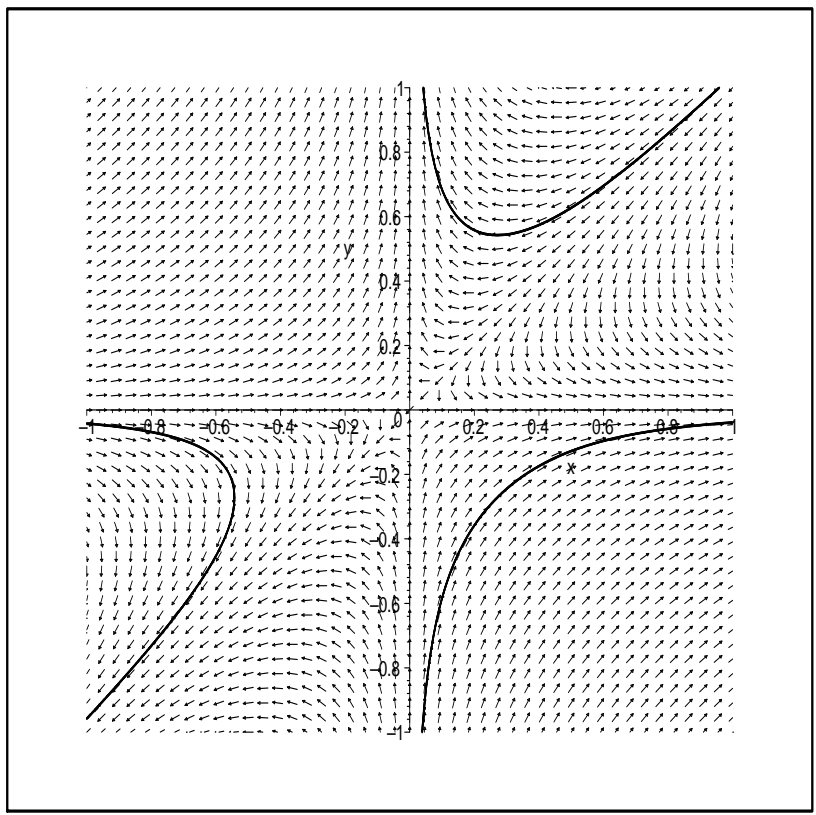,width=305pt,height=305pt,angle=-90}
\caption{The vector field of the flow $\Lambda(\m{x})$ is given by $x^2-2xy\bl y^2-2xy$. Here we show the normalized vector field, meaning that all arrows are adjusted to have the same length. A selected orbit shown is the elliptic curve $xy(x-y)=-0.04.$ A point travels this orbit with a convention that, for example, if it goes up via the left side of the top-right branch, it reappears on the left side of the bottom-right branch, and so on. So, we always mind the asymptote.}
\end{figure}
\subsection{Basic properties of $\lambda(x,y)$}
The PDE (\ref{parteq}) with the boundary condition (\ref{boundd}) has, as already mentioned, the unique solution $\lambda(x,y)$. In this subsection we will derive few computational results. As was proved in \cite{alkauskas2}, we formally have 
\begin{eqnarray}
\lambda(xz,yz)=xz+\sum\limits_{i=2}^{\infty}z^{i}\varpi^{(i)}(x,y),
\label{seru}
\end{eqnarray}
and the homogenic functions $\varpi^{(i)}(x,y)$ can be recurrently calculated by
\begin{eqnarray}
\varpi^{(i+1)}(x,y)=\frac{1}{i}\,[\varpi^{(i)}_{x}(x,y)\varpi(x,y)+\varpi^{(i)}_{y}(x,y)\varrho(x,y)],\quad i\geq 2;
\label{upow}
\end{eqnarray}
here, as before, $\varpi(x,y)=x^2-2xy$, $\varrho(x,y)=y^2-2xy$. This recursion is essentially equivalent to the PDE (\ref{parteq}). The above also holds for $i=1$ if we make a natural convention that $\varpi^{(1)}(x,y)=x$. Thus, we can calculate these polynomials, as presented in the Table \ref{tablo}. \\
\small
 \begin{table}[h]
\begin{tabular}{c | l}
$i$ & $\varpi^{(i)}(t,1)=\w_{i}(t)$. \\
\hline\hline
$1$ & $t$\\
$2$ & $t^2-2t$\\
$ $ & $ $ \\
$3$ & $t^3-t^2+t$\\
$ $ & $ $ \\
$4$ & $t^4-2t^3$\\
$ $ & $ $ \\
$5$ & $t^5-\frac{5}{2}t^4+\frac{5}{2}t^3$\\
$ $ & $ $ \\
$6$ & $t^6-3t^5+3t^4-2t^3$\\
$ $ & $ $ \\
$7$ & $t^7-\frac{7}{2}t^6+\frac{9}{2}t^5-2t^4+t^3$\\
$ $ & $ $ \\
$8$ & $ t^8-4t^7+\frac{43}{7}t^6-\frac{32}{7}t^5+\frac{5}{7}t^4-\frac{2}{7}t^3$\\
$ $ & $ $ \\
$9$ & $t^9-\frac{9}{2}t^8+\frac{225}{28}t^7-\frac{199}{28}t^6+\frac{51}{14}t^5-\frac{3}{28}t^4+\frac{1}{28}t^3$\\
$ $ & $ $ \\
$10$ & $t^{10}-5t^9+\frac{285}{28}t^8-\frac{75}{7}t^7+\frac{165}{28}t^6-\frac{33}{14}t^5$\\
$ $ & $ $ \\
$11$ & $t^{11}-\frac{11}{2}t^{10}+\frac{88}{7}t^9-\frac{29}{28}t^8+\frac{297}{28}t^7-\frac{99}{28}t^6+\frac{33}{28}t^5$\\
$ $ & $ $ \\
$12$ & $t^{12}-6t^{11}+\frac{213}{14}t^{10}-\frac{295}{14}t^9+\frac{120}{7}t^8-\frac{117}{14}t^7+\frac{3}{2}t^6-\frac{3}{7}t^5$\\
$ $ & $ $ \\
$13$ & $t^{13}-\frac{13}{2}t^{12}+\frac{507}{28}t^{11}-\frac{1573}{56}t^{10}+\frac{1475}{56}t^9-\frac{843}{56}t^8+\frac{309}{56}t^7-\frac{3}{7}t^6+\frac{3}{28}t^5$\\
$ $ & $ $ \\
$14$ & $t^{14}-7t^{13}+\frac{85}{4}t^{12}-\frac{73}{2}t^{11}+\frac{3527}{91}t^{10}-\frac{4741}{182}t^9+\frac{138}{13}t^8-\frac{285}{91}t^7+\frac{27}{364}t^6-\frac{3}{182}t^5$\\
$ $ & $ $ \\
$15$ & $t^{15}-\frac{15}{2}t^{14}+\frac{345}{14}t^{13}-\frac{325}{7}t^{12}+\frac{140325}{2548}t^{11}-\frac{108123}{2548}t^{10}+\frac{53905}{2548}t^9
-\frac{1095}{182}t^8+\frac{3855}{2548}t^7-\frac{15}{2548}t^6+\frac{3}{2548}t^5$.\\
$ $ & $ $ \\
\hline
\end{tabular}
\caption{Polynomials $\mathfrak{w}_{i}(t)$.}
\label{tablo}
\end{table}
\normalsize

The two pairs of functions $(x,y)$ and $(x^2-2xy,y^2-2xy)$ are invariant under conjugation with two independent linear involutions $(x,y)\mapsto(y,x)$ and $(x,y)\mapsto(x-y,-y)$. 
%So, the flow $\Lambda(x,y)=\lambda(x,y)\bullet\lambda(y,x)$ is invariant under these two involutions, too. The invariance under the first involution was already minded, since the second coordinate of $\Lambda$ is a symmetric %image of the first, and the second involution yields exactly the first two identities in (\ref{period}).  
We thus get that polynomials $\w_{n}(t)=\varpi^{(n)}(t,1)$ possess the same $6$-fold symmetry: 
\begin{eqnarray*}
\left\{\begin{array}{c@{\qquad}l}
\displaystyle{\w_{n}(t)+(-t)^{n}\w_{n}\Big{(}1-
\frac{1}{t}\Big{)}+(t-1)^{n}\w_{n}\Big{(}\frac{1}{1-t}\Big{)}=0},\\
\displaystyle{\w_{n}(t)+(1-t)^{n}\w_{n}\Big{(}\frac{t}{t-1}\Big{)}=0}.
\end{array}\right.
\end{eqnarray*}
Moreover,
\begin{eqnarray*}
\w_{n}(t)\equiv 0\text{ mod }t^{\jmath(n)},\quad
\jmath(n)=2\Big{\lfloor}\frac{n+2}{6}\Big{\rfloor},
\end{eqnarray*}
and this congruence is exact, meaning that the next higher power of $t$ is the smallest power present in the polynomial $\w_n(t)$. Computer calculations show that this fact uniquely characterizes $\w_{n}(t)$ among monic polynomials with $6$-fold symmetry only for small $n$. Let
\begin{eqnarray*}
\lambda(x,y)=\sum\limits_{n=0}^{\infty}x^{n}f_{n}(y),\quad f_{n}(y)=\frac{1}{n!}\frac{\partial^{n}}{\partial x^{n}}\lambda(x,y)\Big{|}_{x=0}\in\mathbb{C}[[y]].
\end{eqnarray*}
The functions $f_{n}(y)$ are in fact all polynomials:
\begin{eqnarray*}
f_{1}(y)&=&1-2y+y^2,\\
f_{2}(y)&=&1-y,\\
f_{3}(y)&=&1-2y+\frac{5}{2}y^2-2y^3+y^4-\frac{2}{7}y^5+\frac{1}{28}y^6,\\
f_{4}(y)&=&1-\frac{5}{2}y+3y^2-2y^3+\frac{5}{7}y^4-\frac{3}{28}y^5,\\
f_{5}(y)&=&1-3y+\frac{9}{2}y^2-\frac{32}{7}y^3+\frac{51}{14}y^4-\frac{33}{14}y^5+\frac{33}{28}y^6-\frac{3}{7}y^7+\frac{3}{28}y^8
-\frac{3}{182}y^9+\frac{3}{2548}y^{10},\\
f_{6}(y)&=&1-\frac{7}{2}y+\frac{43}{7}y^2-\frac{199}{28}y^3+\frac{165}{28}y^4-\frac{99}{28}y^5+\frac{3}{2}y^6-\frac{3}{7}y^7+\frac{27}{364}y^8-\frac{15}{2548}y^9,
\end{eqnarray*}
and so on. By the direct calculation, using (\ref{upow}), we get (computationally now, which is automatic \emph{a posteriori} we finish the proof of the Theorem \ref{thm2}) that
\begin{eqnarray}
\lambda(0,x)&=&0,\quad \lambda(x,x)=\frac{x}{1+x},\quad \lambda(x,0)=\frac{x}{1-x},\\
\lambda_{x}(x,x)&=&\frac{1}{2}(x+1)^{-2}+\frac{1}{2}(x+1)^{2},\quad
\lambda_{y}(x,x)=\frac{1}{2}(x+1)^{-2}-\frac{1}{2}(x+1)^{2},\nonumber
\label{sing}
\end{eqnarray}
but
\begin{eqnarray}
\begin{split}
\lambda(x,-x)
&=x+3x^2+3x^3+3x^4+6x^5+9x^6+12x^7+\frac{117}{7}x^8+\frac{171}{7}x^9\\
&+\frac{246}{7}x^{10}
+\frac{348}{7}x^{11}+\frac{495}{7}x^{12}+\frac{708}{7}x^{13}+\frac{13140}{91}x^{14}+\frac{131076}{637}x^{15}\\
&+\frac{186903}{637}x^{16}
+\frac{266670}{637}x^{17}+\frac{380403}{637}x^{18}+\frac{542532}{637}x^{19}+\frac{1130958}{931}x^{20}\\
&+\frac{20971530}{12103}x^{21}+\frac{209391300}{84721}x^{22}+\frac{298661544}{84721}x^{23}
+\frac{425993769}{84721}x^{24}+\cdots.\\
\end{split}\label{skew}
\end{eqnarray}
The orbits of the flow $\Lambda$ are given by $\mathscr{W}(x,y)=xy(x-y)=c\in\mathbb{C}$. Consider the plane cubic (in affine coordinates) $XY(X-Y)=c$.
When $c\neq 0$, this is a non-singular cubic and thus an elliptic curve. If
\begin{eqnarray*}
X=\frac{c-q}{2p},\quad Y=\frac{-c-q}{2p},\end{eqnarray*}
we get it in Weierstrass form (also in affine coordinates)
\begin{eqnarray*}
(p:q:1)\in P^{2}(\mathbb{C}):q^2=4p^3+c^2.
\end{eqnarray*}
\subsection{Hypergeometric approach}The function $\lambda(x,y)$ can be explored in two different (mutually inverse) ways - either using hypergeometric function $W(x)$, or using Dixonian elliptic functions $\sm(u)$ and $\cm(u)$. We choose the second way, but will now exhibit how the first approach does work. Both methods reduce in fact to algebraic manipulations in quotient rings of rings of rational functions.  Let us define the curve $\mathscr{C}_{0}\subset\mathbb{R}^{2}$ parametrically by (see Figure \ref{figure2})
\begin{eqnarray*}
\mathscr{C}_{0}=\Big{\{}\Big{(}xW(x),W(x)\Big{)}:-\infty<x<1\Big{\}}.
\end{eqnarray*}
\begin{prop}The function $\lambda(x,y)$ vanishes on the curve $\mathscr{C}_{0}$:
\begin{eqnarray*}
\lambda(xW(x),W(x))\equiv 0 \text{ for }-\infty<x<1.
\end{eqnarray*}
\end{prop}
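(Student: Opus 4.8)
The plan is to restrict $\lambda$ to the curve $\mathscr{C}_0$, study it as a function of the single variable $x\in(-\infty,1)$, and show it obeys a linear homogeneous first-order ODE which, together with its value at one point, forces it to vanish. Put $X(x)=xW(x)$, $Y(x)=W(x)$ and $g(x)=\lambda(X(x),Y(x))$, so that $X'(x)=W(x)+xW'(x)$, $Y'(x)=W'(x)$ and, by the chain rule, $g'(x)=\lambda_x(X,Y)X'(x)+\lambda_y(X,Y)Y'(x)$. The curve $\mathscr{C}_0$ lies in the domain of $\lambda$; in particular $\lambda$ is analytic in a neighbourhood of $(0,1)=(X(0),Y(0))$ and, since $\lambda(0,y)\equiv 0$, we have $g(0)=\lambda(0,1)=0$.

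The decisive step is to show that along $\mathscr{C}_0$ the tangent vector $(X'(x),Y'(x))$ is proportional to the displacement vector $(\varpi(X,Y)-X,\ \varrho(X,Y)-Y)=(X^2-2XY-X,\ Y^2-2XY-Y)$ of the PDE (\ref{parteq}). Substituting $X=xW$, $Y=W$ and cancelling the common factor $W$, the $2\times 2$ determinant $X'(x)\,(Y^2-2XY-Y)-Y'(x)\,(X^2-2XY-X)$ collapses, after straightforward expansion, to $W(x)\bigl[\,3x(1-x)W'(x)+(1-2x)W(x)-1\,\bigr]$, which vanishes identically by the defining ODE (\ref{prinz}) of $W$. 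Hence $(X'(x),Y'(x))=c(x)\,(X^2-2XY-X,\ Y^2-2XY-Y)$ with
\[
c(x)=\frac{W(x)+xW'(x)}{xW(x)\bigl(W(x)(x-2)-1\bigr)} .
\]
Since $W(x)>0$ and $W(x)(x-2)-1<-1$ for $x<1$, the function $c$ is real-analytic on $(-\infty,0)\cup(0,1)$ and has a simple pole at $x=0$ with residue $-\tfrac13$. Feeding the proportionality into the formula for $g'$ and invoking the PDE (\ref{parteq}) yields
\[
g'(x)=c(x)\bigl[\lambda_x(X,Y)(X^2-2XY-X)+\lambda_y(X,Y)(Y^2-2XY-Y)\bigr]=-c(x)\,g(x)
\]
on $(-\infty,0)\cup(0,1)$.

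It remains to conclude $g\equiv 0$, and here lies the only real obstacle: the ODE $g'=-c\,g$ is \emph{singular} at $x=0$, so one cannot simply appeal to uniqueness from $g(0)=0$. On $(0,1)$ every solution is a constant multiple of $\psi(x)=\exp\!\bigl(-\int_{x_1}^{x}c(t)\,\d t\bigr)$ for a fixed $x_1\in(0,1)$; because $c(x)=-\tfrac1{3x}+(\text{a function analytic at }0)$, one gets $\psi(x)=x^{1/3}\Phi(x)$ with $\Phi$ analytic and $\Phi(0)\ne 0$, so $\psi$ has a genuine branch point at $x=0$. On the other hand $g$ is analytic at $0$, being the composition of the analytic curve $x\mapsto(xW(x),W(x))$ with $\lambda$, analytic near $(0,1)$, and $g(0)=0$. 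An analytic function cannot agree near $0^{+}$ with a nonzero multiple of an $x^{1/3}$-branch, so the constant must be zero, i.e.\ $g\equiv 0$ on $(0,1)$; by the identity theorem for real-analytic functions $g$ then vanishes on a full neighbourhood of $0$.

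Finally, for $x<0$ set $s=\tfrac{x}{x-1}\in(0,1)$. Pfaff's relation $W(x)=\tfrac1{1-x}W\!\bigl(\tfrac{x}{x-1}\bigr)$ gives $W(x)=(1-s)W(s)$ and $xW(x)=-sW(s)$, so the point $(xW(x),W(x))$ equals $\gamma(sW(s),W(s))$ with $\gamma=\sigma\tau\sigma=\bigl(\begin{smallmatrix}-1&0\\-1&1\end{smallmatrix}\bigr)\in\Sigma$. Reading the invariance $\Lambda\circ\gamma=\gamma\circ\Lambda$ of (SYMM) on first coordinates is exactly the identity $\lambda(-a,b-a)=-\lambda(a,b)$ of (\ref{period}); with $a=sW(s)$, $b=W(s)$ this gives $g(x)=\lambda(-sW(s),(1-s)W(s))=-\lambda(sW(s),W(s))=-g(s)=0$. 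Combined with $g(0)=0$ and the vanishing on $(0,1)$, this proves $\lambda(xW(x),W(x))\equiv 0$ for $-\infty<x<1$. (Alternatively, the range $x<0$ could be handled by ODE uniqueness on $(-\infty,0)$ together with the analytic continuation of $g$ through $x=0$, in place of the symmetry $\gamma$.)
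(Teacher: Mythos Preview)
Your proof is correct and follows essentially the same route as the paper: restrict $\lambda$ to $\mathscr{C}_0$, use the ODE (\ref{prinz}) for $W$ to see that the tangent to $\mathscr{C}_0$ is proportional to $(\varpi(X,Y)-X,\varrho(X,Y)-Y)$, invoke the PDE (\ref{parteq}) to obtain the linear homogeneous first-order ODE $3x(x-1)W(x)\,P'(x)=-P(x)$, and conclude from $P(0)=0$ that $P\equiv 0$. The paper's version is terser---it simply asserts that the only solution with $P(0)=0$ vanishes identically---while you supply the singularity analysis at $x=0$ (the $x^{1/3}$ branch versus analyticity of $g$) and a separate argument for $x<0$ via the symmetry (\ref{period}), details the paper leaves implicit.
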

\begin{proof} Let $P(x)=\lambda(xW(x),W(x))$. When $x=0$, we have $P(0)=0$. Let $A=xW(x)$, $B=W(x)$. Then
\begin{eqnarray}
P'(x)=\lambda_{x}(A,B)[W(x)+xW'(x)]+\lambda_{y}(A,B)W'(x).\label{zer}
\end{eqnarray}
Note that the differential equation (\ref{prinz}) for $W(x)$ implies
\begin{eqnarray*}
A^{2}-2AB-A&=&[W(x)+xW'(x)]\cdot W(x)3x(x-1),\\
B^{2}-2AB-B&=&W'(x)\cdot W(x)3x(x-1).
\end{eqnarray*}
Thus, let us multiply (\ref{zer}) by $W(x)3x(x-1)$. Then the PDE for $\lambda(x,y)$ implies
\begin{eqnarray*}
P'(x)\cdot\Big{(}W(x)3x(x-1)\Big{)}&=&
\lambda_{x}(A,B)(A^2-2AB-A)+\lambda_{y}(A,B)(B^2-2AB-B)\\
&=&
-\lambda(A,B)=-P(x),\text{ for }-\infty<x<1.
\end{eqnarray*}
This is the differential equation for $P(x)$, and, minding $P(0)=0$, its only solution is $P(x)\equiv 0$.
\end{proof}
The curve $\mathscr{C}_{0}$ has the asymptote $x=y$. More precisely, it is much closer to the cubic
\begin{eqnarray*}
\Big{(}x,x+\frac{\Pi}{x^2}\Big{)},\quad x>1.
\end{eqnarray*}
Let $E(c)=\{(x,y)\in\mathbb{C}^{2}:xy(x-y)=c\}$, and $E^{\mathbb{R}}(c)=\{(x,y)\in\mathbb{R}^{2}:xy(x-y)=c\}$. Not every orbit (elliptic curve) $E^{\mathbb{R}}(c)$ for $c\neq 0$ intersects the curve $\mathscr{C}_{0}$, but only those with $c\in(-\Pi,\Pi)\setminus\{0\}$. This follows from the inspection of the function $W^{3}(x)x(1-x)$, which is monotonically increasing from $-\Pi$ to $\Pi$ in the interval $(-\infty,1)$; see the representation (\ref{everyell}). The Figure \ref{figure2} shows the boundary case $E^{\mathbb{R}}(\Pi)$, where this elliptic curve and $\mathscr{C}_{0}$ touch at infinity. On the curve 
\begin{eqnarray*}
\mathscr{C}_{\infty}=\Big{\{}\Big{(}W(x),xW(x)\Big{)}:-\infty<x<1\Big{\}}
\end{eqnarray*}
the function $\lambda(x,y)$ attains the value $\infty$, and the same holds on the curve
\begin{eqnarray*}
\mathscr{C}_{1}=\Big{\{}\Big{(}(x-1)W(x),-W(x)\Big{)}:-\infty<x<1\Big{\}}.
\end{eqnarray*}
The curve $\mathscr{C}_{0}$ as a whole remains intact under the linear involution $(x,y)\mapsto(-x,y-x)$, while the other two curves interchange. This and similar properties follow from the Subsection \ref{hyper}. This shows how the action of the group $S_{3}$ on the function $W(x)$ can be interpreted as its action on $\lambda(x,y)$. Despite the possibility to develop all properties of $\lambda(x,y)$ over $\mathbb{C}^{2}$ (not just over $\mathbb{R}^{2}$) in the framework of $W(x)$, henceforth we choose the elliptic function setting.
\subsection{Analytic formulas}
\label{sub:ann}
 Our second main result of this paper reads as follows.
\begin{thm}The function $\lambda(x,y)$ can be given the analytic expression:
\begin{eqnarray*}
\lambda(x,y)&=&\frac{\v\big{(}c\v^2-sc^2y\v+s^2xy\big{)}^{2}}
{y\big{(}x-c^3y\big{)}\big{(}c^2\v^2-sx\v+s^2cxy\big{)}},\\
\lambda(y,x)&=&\frac{\v\big{(}c^2\v^2-sx\v+s^2cxy\big{)}^{2}}
{x\big{(}x-c^3y\big{)}\big{(}c\v^2-sc^2y\v+s^2xy\big{)}};
\end{eqnarray*}
here $\v=\v(x,y)=[xy(x-y)]^{1/3}$, and $s=\sm(\v),c=\cm(\v)$ are the Dixonian elliptic functions. This function satisfies all the above properties. In particular, $\lambda(x,y)$ is a single-valued $2$-variable analytic function.
\label{thm2}
\end{thm}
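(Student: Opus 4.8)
The strategy is to verify that the claimed closed‑form pair $(\lambda(x,y),\lambda(y,x))$ satisfies the defining PDE (\ref{parteq}) together with the boundary condition (\ref{boundd}); since the PDE with this boundary condition has a unique solution, this identification will be complete. The substitution $\v=[xy(x-y)]^{1/3}$ is the key device: along an orbit $xy(x-y)=c$ the quantity $\v$ plays the role of the flow parameter, and one computes directly from $\varpi(x,y)=x^2-2xy$, $\varrho(x,y)=y^2-2xy$ that the directional derivative $\varpi\partial_x+\varrho\partial_y$ annihilates $c=\v^3$ (indeed $x\varpi+y\varrho=0$ up to the right combination — this is exactly the statement that $\mathscr{W}=xy(x-y)$ is constant on orbits), while acting on $\v$ itself it produces a clean factor. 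So the first step is to record these ``logarithmic derivative'' identities: compute $\varpi\,\v_x+\varrho\,\v_y$ and express it through $\v,x,y$, and likewise the action on the auxiliary quantities $A:=c\v^2-sc^2y\v+s^2xy$ and $B:=c^2\v^2-sx\v+s^2cxy$ appearing in numerator and denominator.

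**Reduction to an ODE in $\v$.** Rather than differentiating the messy rational expression in $x,y$ directly, I would exploit the (ELL) structure. Fix a generic orbit $c=xy(x-y)\ne 0$ and parametrise it by $z\mapsto(\lambda^z(x,y),\lambda^z(y,x))$; then $\v$ along this curve equals $z\cdot[\text{const}]^{1/3}$, so $\partial_z$ and $\partial_\v$ agree up to a constant, and the PDE (\ref{parteq}) collapses to the single autonomous ODE $\frac{d}{d\v}\lambda=\lambda\cdot(\text{something})$, equivalently to the recursion (\ref{upow}). The point is then to check that $s=\sm(\v),c=\cm(\v)$ — whose derivatives are $s'=c^2$, $c'=-s^2$ by Proposition \ref{dixonian} — make the right‑hand side of Theorem~\ref{thm2} satisfy this ODE. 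Concretely: differentiate the proposed $\lambda$ with respect to $\v$ using $s'=c^2,c'=-s^2$ (treating $x,y$ as constants on the orbit, which is legitimate since they are determined by $c$ and the ``initial'' point), simplify using the Fermat relation $s^3+c^3=1$, and match against $-\lambda$ times the appropriate factor coming from the change of variables $z\leftrightarrow\v$. This is where the special shape of the numerator (a perfect square) and the particular denominator factors $x-c^3y$, $c^2\v^2-sx\v+s^2cxy$ must conspire; I expect this to be a substantial but mechanical computation in the ring $\mathbb{C}[x,y,\v,s,c]/(s^3+c^3-1)$.

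**Boundary condition and single‑valuedness.** Next, check (\ref{boundd}): as $z\to0$ along the orbit through a fixed $(x,y)$, we have $\v\to0$, hence $s=\sm(\v)\sim\v$ and $c=\cm(\v)\sim1$ (from the Taylor expansions in Proposition~\ref{dixonian}); substituting $s\approx\v$, $c\approx1$ into the formula and keeping leading order in $\v$ should give $\lambda(xz,yz)/z\to x$. One must be slightly careful that the expansion is consistent with $\v^3=xy(x-y)z^3$, i.e. that the apparent singularities $y=0$, $x=c^3y$ are removable in the limit; the identities $\lambda(x,0)=x/(1-x)$, $\lambda(x,x)=x/(1+x)$ already computed in (\ref{sing}) provide independent consistency checks and can be recovered by specialising the closed form (taking $y\to0$ or $y\to x$, where $\v\to0$ as well). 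Finally, single‑valuedness: $\v=[xy(x-y)]^{1/3}$ is a priori three‑valued, but replacing $\v\mapsto\omega\v$ sends $(s,c)=(\sm\v,\cm\v)\mapsto(\sm(\omega\v),\cm(\omega\v))=(\omega\sm\v,\cm\v)$ by Proposition~\ref{dixonian}; substituting $\v\mapsto\omega\v$, $s\mapsto\omega s$, $c\mapsto c$ into the formula for $\lambda$ and checking invariance (each monomial in numerator and denominator must pick up matching powers of $\omega$) shows the expression is a genuine function on $\mathbb{C}^2$ minus the coordinate curves, and the remaining poles are exactly on the curves $\mathscr{C}_\infty,\mathscr{C}_1$ described above.

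**Main obstacle.** The hard part is the central ODE verification — establishing that the perfect‑square numerator and the three denominator factors satisfy the expected first‑order relation under $d/d\v$ modulo $s^3+c^3=1$. The algebra does not obviously telescope, and one needs to find the right intermediate identities (for instance, how $x-c^3y$ and the quadratic $c^2\v^2-sx\v+s^2cxy$ transform under the derivation, and how they relate to $A$ above) before everything cancels. My expectation is that the cleanest route is to introduce the single‑orbit parametrisation first, push all $x,y$‑dependence into the constant $c=xy(x-y)$ and one auxiliary constant fixing the base point, and only then differentiate — verifying the statement on each orbit separately, after which the global claim follows by uniqueness for (\ref{parteq})–(\ref{boundd}).
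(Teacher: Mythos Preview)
Your proposal has a genuine gap in the central ``reduction to an ODE in $\v$'' step. You write that along an orbit $c=xy(x-y)$, ``$\v$ equals $z\cdot[\text{const}]^{1/3}$'' and then propose to ``differentiate the proposed $\lambda$ with respect to $\v$ \dots treating $x,y$ as constants on the orbit''. But $\v=[xy(x-y)]^{1/3}=c^{1/3}$ is \emph{constant} along each orbit: the vector field $\varpi\partial_x+\varrho\partial_y$ is tangent to the level sets of $\mathscr{W}$, so it annihilates $\v$. You cannot hold $x,y$ fixed and simultaneously vary $\v$, and the PDE $(\varpi-x)\lambda_x+(\varrho-y)\lambda_y=-\lambda$ does not collapse to an ODE along orbits. (The recursion (\ref{upow}) concerns Taylor coefficients in the scaling parameter $z$, i.e.\ along rays through the origin --- a different direction entirely.)

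The paper's proof supplies precisely the missing idea. Rather than trying to parametrise orbits by $\v$, it introduces a genuine change of variables $(x,y)\mapsto(u,\v)$ via $x=\s(u)\v$, $y=\c(u)\v$, where the auxiliary elliptic functions $\s,\c$ of Proposition~\ref{basic-sc} are designed so that their derivatives reproduce the vector field: $\s'=-\s^2+2\s\c$, $\c'=-\c^2+2\s\c$. In these coordinates $\v$ is the orbit invariant and $u$ is an ``angular'' coordinate along the orbit; the PDE becomes the transparent linear equation $\v\Omega_u+\v\Omega_\v=\Omega$, whose general solution is $\Omega=\v\,H(u-\v)$. The boundary condition forces $H=\s$, giving $\lambda=\s(u-\v)\v$, and only then is the addition formula for $\s$ (Proposition~\ref{basic-sc}) invoked to convert this into the stated rational expression in $x,y,\v,s,c$. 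Your direct-verification route could in principle succeed, but you would have to compute $\lambda_x,\lambda_y$ honestly via the full chain rule (through $\v,s,c$ \emph{and} the explicit $x,y$) and check the two-variable PDE as written --- not a nonexistent one-variable reduction. Your sketches of the boundary condition and of single-valuedness (via $\v\mapsto\omega\v$, $s\mapsto\omega s$, $c\mapsto c$) are correct and agree with the paper's treatment.
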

\begin{figure}
\epsfig{file=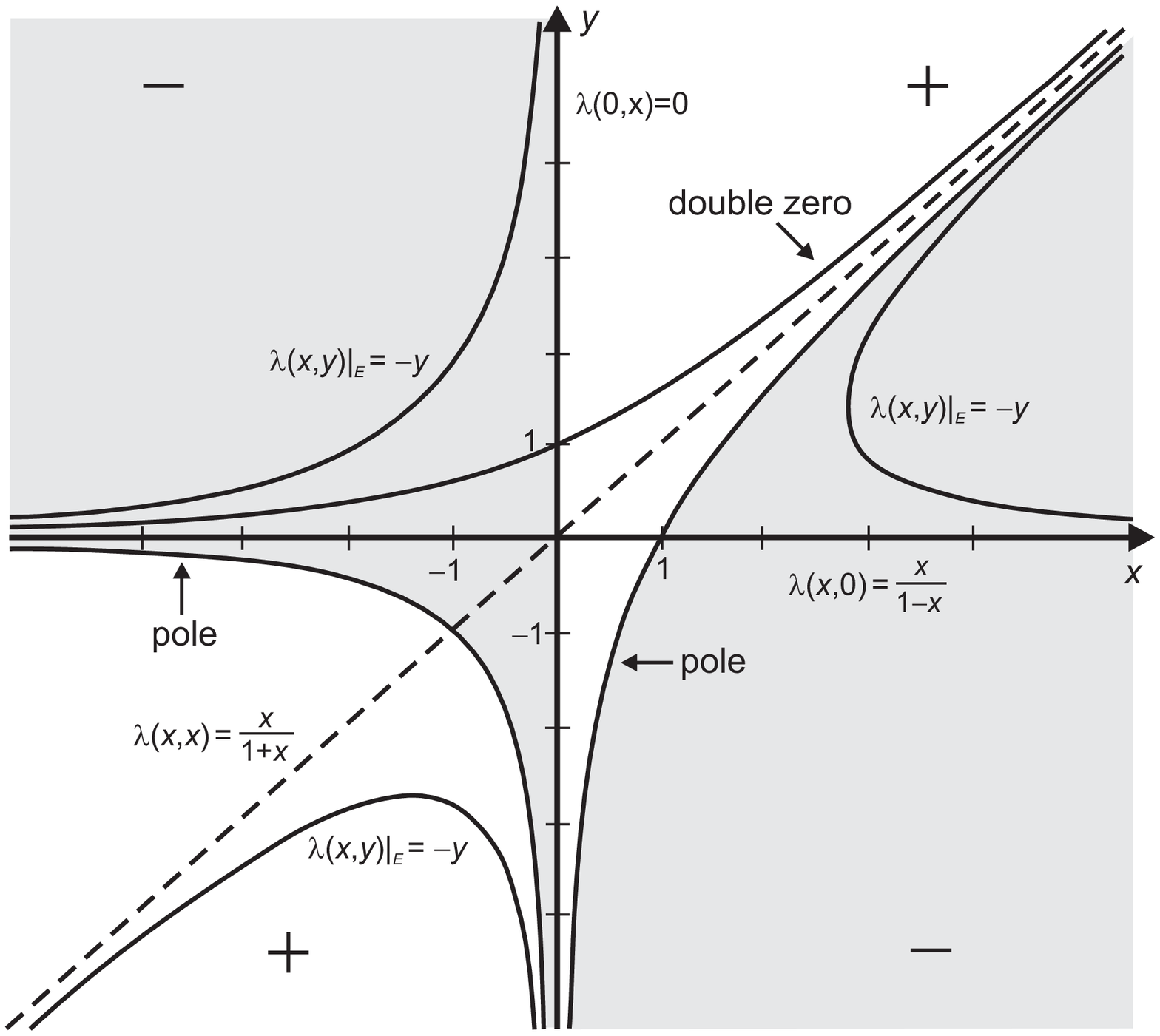,width=340pt,height=315pt,angle=0}
\caption{The function $\lambda(x,y)$ for $(x,y)\in\mathbb{R}^{2}$; it is a real section of the bivariate analytic function. The grey and white regions show where this function is negative and, respectively, positive. The double zero is the curve $\mathscr{C}_{0}$ parametrized by $(xW(x),W(x))$, where $W(x)$ is the hypergeometric function; see the Subsection \ref{hyper}. Three branches of the elliptic curve $xy(x-y)=\pi^{3}_{3}/27=5.513_{+}$ are shown; on this curve the expression for $\lambda(x,y)$ is particularly simple. Note however that there is a countable number of curves on which $\lambda(x,y)$ has double-zeros or poles, but these fall out of the borders of the picture.}
\label{figure2}
\end{figure}
Of course, one expression for $\lambda$ is sufficient, but interchanging $x$ and $y$ changes $\v$ into $-\v$, and we rather rewrote $\cm(-\v)$ and $\sm(-\v)$ in terms of $\cm(\v)$ and $\sm(\v)$ using Proposition \ref{dixonian}. The function $\lambda$ is unramified because of Proposition \ref{basic-sc}. Indeed, for given $(x,y)\in\mathbb{C}^{2}$, fix one of the three values for $\v=[xy(x-y)]^{1/3}$, and use this value consistently in the above formula for $\lambda(x,y)$. Then we get that the value of $\lambda(x,y)$ does not depend on our choice of the cubic root: we can rewrite the expression for $\lambda$ as
\begin{eqnarray*}
\lambda(x,y)=\frac{x(x-y)\big{(}c-(s\v^{-1})c^2y+(s\v^{-1})^2xy\big{)}^{2}}
{\big{(}x-c^3y\big{)}\big{(}c^2-(s\v^{-1})x+(s\v^{-1})^2cxy\big{)}},\\
\end{eqnarray*}
and both series $\cm(\v)$ and $\sm(\v)\v^{-1}$ contain only integral powers of $x,y$. This function does satisfy the property (PDE) by the very solution; see the Subsection \ref{subsol}. The property (FLOW), as in the cases of $\phi^{{\rm e}}$ and $\phi^{{\rm t}}$, is equivalent to addition formulas for the Dixonian functions given by Proposition \ref{dixonian}. This is, to our understanding, a remarkable fact! We note that it is considerably more natural to write down the analytic expression of $\lambda$ in terms of $A=\s(\v)\v$ and $B=\c(\v)\v$, where $\s$ and $\c$ are other two elliptic functions introduced in the Subsection \ref{sub-dix}. This expression is given by (\ref{quasii}), and the rational function in the middle of (\ref{quasii}) is $1$-homogenic and defines the quasi-rational projective flow of genus $1$; see the Subsection \ref{sub-qf}. We chose to present the results in terms of Dixonian functions $\sm$ and $\cm$ solely because these are more conventional functions and there are few results already available in the literature. In fact, the expression for $\lambda(x,y)$ in the Theorem \ref{thm2} is also $1$-homogenic, if $x,y,\v$ are given weight $1$, and $\cm(\v)$ and $\sm(\v)$ are both of weight $0$.
\subsection{Values on special curves}
\label{corro}
Now we are able to say more about the Taylor coefficients of the function $\lambda(x,-x)x^{-1}$, which were computed in (\cite{alkauskas2}, Section 4, Step III); see (\ref{skew}).
\begin{cor}
Let $(x,y)=(z,-z)$, choose $\v=-z\sqrt[3]{2}$. We have
\begin{eqnarray*}
\frac{\lambda(z,-z)}{z}=\frac{\sqrt[3]{2}\big{(}c\sqrt[3]{4}-sc^2\sqrt[3]{2}-s^2\big{)}^{2}}
{\big{(}1+c^3\big{)}\big{(}c^2\sqrt[3]{4}+s\sqrt[3]{2}-s^2c\big{)}},\quad c=\cm(-z\sqrt[3]{2}),\quad s=\sm(-z\sqrt[3]{2}).
\end{eqnarray*}
Further, $\lambda(z,-z)z^{-1}$ is an elliptic function.
%of order ... 
In general, for fixed $x,y$, $xy(x-y)\neq 0$, the function $\lambda(xz,yz)z^{-1}$ is an elliptic function in $z$.
%of order $...$  for $c\in\mathbb{C}$. 
On the other hand, fix $c\neq 0$. On the elliptic curve $E(c)$ the function $\lambda(x,y)$ is a rational function in $x,y$ with degree $6$ numerator and degree $3$ denominator:
\begin{eqnarray*}
\lambda(x,y)\Big{|}_{E(c)}&=&\frac{x(x-y)(B-AB^{2}y+A^2xy)^{2}}
{(x-B^3y)(B^2-Ax+A^2Bxy)}=\mathscr{R}(A,B;x,y),\\
\lambda(y,x)\Big{|}_{E(c)}&=&\frac{y(x-y)(B^2-Ax+A^2Bxy)^{2}}
{(x-B^3y)(B-AB^2y+A^2xy)}
=\mathscr{R}\Big{(}\frac{A}{B},\frac{1}{B};y,x\Big{)}.
\end{eqnarray*}
here $A=\sm(\v)\v^{-1}$, $B=\cm(\v)$.
\label{isvada}
\end{cor}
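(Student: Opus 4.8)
The plan is to deduce all three assertions of Corollary~\ref{isvada} from the closed formula of Theorem~\ref{thm2} together with the parity relations for $\sm$ and $\cm$ in Proposition~\ref{dixonian}; no new analytic ingredient is needed, so the argument will be a substitution followed by a degree count.

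For the two ellipticity statements (and the explicit formula for $\lambda(z,-z)/z$) I would fix $x,y$ with $xy(x-y)\neq 0$, set $\v_{0}=[xy(x-y)]^{1/3}$ for one chosen cube root, and replace $(x,y)$ by $(xz,yz)$ in Theorem~\ref{thm2}. Since $\v(xz,yz)=[z^{3}xy(x-y)]^{1/3}$ may be taken to be $z\v_{0}$ — a globally single-valued choice, legitimate by the branch-independence of the formula for $\lambda$ noted after Theorem~\ref{thm2} — substituting $\v=z\v_{0}$, $s=\sm(z\v_{0})$, $c=\cm(z\v_{0})$ and pulling the powers of $z$ out (a factor $z^{5}$ from the numerator, $z^{4}$ from the denominator) will give
\[
\frac{\lambda(xz,yz)}{z}=\frac{\v_{0}\bigl(c\v_{0}^{2}-sc^{2}y\v_{0}+s^{2}xy\bigr)^{2}}{y\bigl(x-c^{3}y\bigr)\bigl(c^{2}\v_{0}^{2}-sx\v_{0}+s^{2}cxy\bigr)},
\]
a rational expression in $s=\sm(z\v_{0})$ and $c=\cm(z\v_{0})$ whose coefficients depend only on $x,y,\v_{0}$. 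Now $z\mapsto\sm(z\v_{0})$ and $z\mapsto\cm(z\v_{0})$ are elliptic with period lattice $\v_{0}^{-1}(\mathbb{Z}\pi_{3}\oplus\mathbb{Z}\pi_{3}\omega)$, and the left side is a priori meromorphic in $z$ (holomorphic near $0$ with value $x$, by (\ref{boundd})); since the right side is invariant under translation of $z$ by a lattice vector, $\lambda(xz,yz)z^{-1}$ is an elliptic function of $z$, which is the general claim. The first assertion is then the specialization to $x=1$, $y=-1$ (so $\v_{0}=-\sqrt[3]{2}$, $\v=-z\sqrt[3]{2}$): using $\v_{0}^{2}=\sqrt[3]{4}$, $xy=-1$, $x-c^{3}y=1+c^{3}$, the displayed expression collapses, as claimed, to
\[
\frac{\lambda(z,-z)}{z}=\frac{\sqrt[3]{2}\bigl(c\sqrt[3]{4}-sc^{2}\sqrt[3]{2}-s^{2}\bigr)^{2}}{(1+c^{3})\bigl(c^{2}\sqrt[3]{4}+s\sqrt[3]{2}-s^{2}c\bigr)},
\]
and it is elliptic in $z$ because $\sm(-z\sqrt[3]{2})$ and $\cm(-z\sqrt[3]{2})$ are.

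For the restriction to $E(c)$, $c\neq 0$, the key point is that on $E(c)$ the orbit function $\mathscr{W}(x,y)=xy(x-y)$ equals the constant $c$, so $\v=[xy(x-y)]^{1/3}$ is a cube root of a constant and hence takes one fixed value on all of $E(c)$ (no monodromy), whence $A=\sm(\v)\v^{-1}$ and $B=\cm(\v)$ are constants there. Substituting $s\v^{-1}=A$ and $c=B$ into the alternative expression for $\lambda$ written just after Theorem~\ref{thm2} gives at once $\lambda(x,y)|_{E(c)}=\mathscr{R}(A,B;x,y)$, whose numerator $x(x-y)(B-AB^{2}y+A^{2}xy)^{2}$ has degree $6$ and whose denominator $(x-B^{3}y)(B^{2}-Ax+A^{2}Bxy)$ has degree $3$. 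For the second formula I would note that the cube root relevant to the flipped point $(y,x)$ is $\v(y,x)=[yx(y-x)]^{1/3}=-\v$, so by $\sm(-\v)=-\sm(\v)/\cm(\v)$ and $\cm(-\v)=1/\cm(\v)$ (Proposition~\ref{dixonian}) the data attached to $(y,x)$ become $\sm(-\v)(-\v)^{-1}=A/B$ and $\cm(-\v)=1/B$; hence $\lambda(y,x)|_{E(c)}=\mathscr{R}(A/B,1/B;y,x)$, and expanding this and clearing the powers of $B$ produces the closed form in the statement.

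The only place the argument can go astray is the consistent bookkeeping of the cube root $\v$ under the two substitutions $(x,y)\mapsto(xz,yz)$ and $(x,y)\mapsto(y,x)$, together with the two short simplifications — pulling the power of $z$ through the fraction in the first part, and clearing the powers of $B$ in $\mathscr{R}(A/B,1/B;y,x)$. Branch-independence of the formula of Theorem~\ref{thm2} disposes of the first worry and the two simplifications are mere bracket expansions, so I do not expect a genuine obstacle here: the corollary is a formal consequence of Theorem~\ref{thm2} and Proposition~\ref{dixonian}.
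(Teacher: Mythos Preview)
Your proposal is correct and follows exactly the approach the paper implicitly takes: the corollary is stated without a separate proof precisely because it is obtained by direct substitution into the formula of Theorem~\ref{thm2} (and its rewritten form immediately following it), together with the parity relations for $\sm,\cm$ from Proposition~\ref{dixonian}. Your bookkeeping of the cube root under $(x,y)\mapsto(xz,yz)$ and $(x,y)\mapsto(y,x)$, and your verification of the $\mathscr{R}(A/B,1/B;y,x)$ identity, are accurate and match the paper's own remarks.
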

The power series  at the origin for $\cm(-z\sqrt[3]{2})$ and $\sm(-z\sqrt[3]{2})\sqrt[3]{4}$ contain only rational coefficients, so Taylor coefficients of $\lambda(z,-z)z^{-1}$ belong to $\mathbb{Q}$, and they are given by (\ref{skew}). In fact, we can use the Taylor series in Proposition \ref{dixonian} to calculate the expansion of $\lambda(z,-z)z^{-1}$; these series were borrowed from \cite{flajolet-c}, formula (6), only note the typo ``8880000" instead of the correct value ``880000". This and our first method of calculations, i.e. (\ref{skew}), which is done using the recursion (\ref{upow}), match perfectly. \\

For some special $c$ the function $\lambda(x,y)$ has a particularly simple expression. For example, let $\v=(\frac{\pi_{3}}{3}+\pi_{3}k)$ for any $k\in\mathbb{Z}$, $c=\v^{3}$. (see the Subsection \ref{sub-dix}). Then $B=0$, $A=\v^{-1}$, and then
\begin{eqnarray*}
\lambda(x,y)\big{|}_{E(c)}=-\frac{x(x-y)y^2}{\v^{3}}=-y,\quad \lambda(y,x)\big{|}_{E(c)}=x-y.
\end{eqnarray*}
Also, for example, when $\v$ is a zero of $\sm$ equivalent to the point $q_{2,1}$ (see the Subsection \ref{sub-dix} as well), we get $A=0$, $B=\omega^2$, and then
\begin{eqnarray*}
\lambda(x,y)\big{|}_{E(c)}=x,\quad \lambda(y,x)\big{|}_{E(c)}=y.
\end{eqnarray*}
To summarize, we have observed a different behaviour of $\lambda(x,y)$ on three kinds of curves: lines through the origin; elliptic curves; transcendental curves parametrized by hypergeometric function $W(x)$. In the end of the Subsection \ref{sub:rat} we will see that the function $\lambda(x,y)$ is algebraic function on another transcendental curve
\begin{eqnarray*}
\s(u)u\equiv 1.
\end{eqnarray*}
\subsection{The symmetry property} 
\label{sub-symmetry}
Now we will turn our attention to the property (SYMM). Note that if $xy(x-y)=c$, then $(-y,x-y),(y-x,-x)\in E(c)$, and so the first equality in (\ref{period})
means that the following as if holds on the elliptic  curve $E(c)$:
\begin{eqnarray*}
\mathscr{R}(A,B;x,y)+\mathscr{R}(A,B;-y,x-y)
+\mathscr{R}(A,B;y-x,-x)\mathop{=}^{?}0.
\end{eqnarray*}
In fact, this is NOT the identity satisfied by the rational function
$\mathscr{R}(A,B;x,y)$ for arbitrary free $A,B,x,y$, as computer calculations show: if we calculate the l.h.s. for arbitrary unspecified variables $A,B,x,y$, we get a complicated and lengthy expression, and it involves high powers of $A$ and $B$. But in our case the four variables are related via the identity $A^{3}xy(x-y)+B^3\equiv 1$. Consider now the field of rational functions $\mathbb{C}(A,B,x,y)$. What we really get is
\begin{eqnarray*}
\mathscr{R}(A,B;x,y)+\mathscr{R}(A,B;-y,x-y)+\mathscr{R}(A,B;y-x,-x)
\equiv 0\text{ mod }\big{(}A^{3}xy(x-y)+B^3-1\big{)},
\end{eqnarray*}
and this is satisfied for free variables: the numerator of the l.h.s. is divisible by $A^{3}xy(x-y)+B^3-1$, what MAPLE really does confirm, too. Likewise, if $(x,y)\in E(c)$, then $(-x,y-x)\in E(-c)$. So, the second identity of (\ref{period}), if we use Proposition \ref{dixonian}, gives
\begin{eqnarray*}
\mathscr{R}(A,B;x,y)+\mathscr{R}\Big{(}\frac{A}{B},\frac{1}{B};-x,y-x\Big{)}
\equiv 0\text{ mod }\big{(}A^{3}xy(x-y)+B^3-1\big{)}.
\end{eqnarray*}
This is also easily reverified using computer algebra engine. \\

In the Subsection \ref{sub:group} we will see that the Symmetry property can be described in terms of the group structure of the elliptic curve $E(c)$.
\subsection{Relation to P\'{o}lya-Eggenberger urn model} The PrTE (\ref{funkk}) was first introduced in \cite{alkauskas1}. This is a new and fascinating object with many ramifications; see the Section \ref{sec:further}. While working on the paper \cite{alkauskas2} we were unaware of any research related to the algebraic theory of the PDE (\ref{g-parteq}). However, in the course of the evolvement of the current work the importance of the constant $\pi_{3}$ emerged. This later led us to the paper \cite{flajolet-b}, and then to many other works, including  \cite{dumont1,dumont2,flajolet-bl,flajolet-c,flajolet-g-p,fdp}. It appeared that a theory related to the PDE similar to (\ref{g-parteq}) was already well investigated with relation to the \emph{P\'{o}lya-Eggenberger urn models}. We will shortly describe the setting and the reasons why both directions (projective flows and urns) have a certain intersection in common.\\

Suppose there is an urn containing a finite number of balls of two types: ${\sf x}$ and ${\sf y}$. At each tick of a time we choose one ball at random, observe its color, put it back to the urn, and then add or subtract balls from the urn according to the result of this random pick. If the ball ${\sf x}$ is picked, we add balls of type ${\sf x}$, ${\sf y}$ in quantities $\alpha$ and $\beta$, respectively. If the ball is ${\sf y}$, we add balls of type ${\sf x}$, ${\sf y}$ in quantities $\gamma$ and $\delta$, and negative numbers are allowed and stand for a subtraction. The urn is said to be \emph{balanced}  if increase of balls at each step is deterministic, i.e. $\alpha+\beta=\gamma+\delta=s$. The task is to describe all histories of any given length; \emph{a history} is any legal sequence of steps that lead from the initial urn to any specific one; say, the one where all balls are of type ${\sf x}$. (Of course, certain simple arithmetic conditions, called \emph{tenability conditions}, on coefficients $\alpha,\beta,\gamma,\delta$, and the initial urn composition are needed to ensure that at each step the action of subtraction can be performed). It appears that histories can be investigated via analysis of the differential operator \cite{flajolet-g-p}
\begin{eqnarray*}
\Upsilon=x^{1+\alpha}y^{\beta}\frac{\partial}{\partial x}+x^{\gamma}y^{1+\delta}\frac{\partial}{\partial y}.
\end{eqnarray*}
This is completely analogous to the recurrence (\ref{upow}) where $\varpi(x,y)\bl\varrho(x,y)$ is an arbitrary $2$-homogenic vector field. So, it appears that the property of the urn to be \textbf{balanced} corresponds to \textbf{projectivity} in our setting. Nevertheless, these fields of research have many differences. While we deal with arbitrary $2$-homogenic rational functions in $\mathbb{C}(x,y)$, the research in P\'{o}lya urns concentrates and has a combinatoric meaning only for the monomials $x^{1+\alpha}y^{\beta}\bl x^{\gamma}y^{1+\delta}$. On the other hand, our vector fields are $2$-homogenic, which corresponds to the balance $1$ in the urn setting; but other balances are also investigated and are of great importance; balance $s\neq 1$ does not seem to correspond to a flow or other geometric object. So, the two theories are different though have an non-empty intersection and there are many similarities! For example, a relative of the function $\lambda(x,y)$  has already appeared in the literature, though in a disguise which at first seem to be unrecognisable (\cite{flajolet-c}, Proposition 3):
\begin{eqnarray*}
H(x,z)=(1-x^{3})^{1/3}\,{\rm smh}\Bigg{(}(1-x^{3})^{1/3}z+
\int_{0}^{x(1-x^{3})^{-1/3}}\frac{\d s}{(1+s^{2})^{2/3}}\Bigg{)};
\end{eqnarray*}
here, as mentioned in the Subsection \ref{sub-dix}, ${\rm smh}(u)=-\sm(-u)$. Still, this function is closely related to $\lambda(x,y)$, which can be seen if we go a few step further: that is, apply the addition formulas for the Dixonian elliptic functions and use their relation with hypergeometric series. Then the above complicated formula would turn into a formula similar to that in our Theorem \ref{thm2}. Philippe Flajolet and many other mathematicians have developed powerful analytic and combinatoric techniques to investigate (not necessarily balanced) urns; see, for example, the excellent study \cite{flajolet-bl}. In our paper we did not touch the combinatoric aspects at all. Possibly, these techniques will be of big help in the prospective work on quasi-rational flows \cite{alkauskas4}.
\section{The proofs. Further properties}
\subsection{The analytic expression for $\lambda(x,y)$}
\label{subsol}
With all tools at hand, we can solve analytically the PDE (\ref{parteq}) and prove the Theorem \ref{thm2}. 
\begin{proof}Let us introduce
\begin{eqnarray*}
\Omega(u,\v)=\lambda\Big{(}\s(u)\v,\c(u)\v\Big{)}.
\end{eqnarray*}
Each pair of finite complex numbers $x,y$ can be represented as $(x,y)=(\s(u)\v,\c(u)\v)$ for $\v\in\mathbb{C}$, $u\in\mathcal{F}$ (fundamental parallelogram of our elliptic functions), except when $xy(x-y)=0$.
The boundary condition requires that
\begin{eqnarray}
\lim\limits_{\v\rightarrow 0}\frac{\Omega(u,\v)}{\v}=\s(u).
\label{mod-bound}
\end{eqnarray}
Then, by a direct calculation and Proposition \ref{basic-sc},
\begin{eqnarray*}
\Omega_{u}(u,\v)&=&\lambda_{x}\Big{(}\s(u)\v,\c(u)\v\Big{)}\s'(u)\v
+\lambda_{y}\Big{(}\s(u)\v,\c(u)\v\Big{)}\c'(u)\v\\
&=&\lambda_{x}\Big{(}\s(u)\v,\c(u)\v\Big{)}\Big{(}-\s^{2}(u)+2\s(u)\c(u)\Big{)}\v\\
&+&\lambda_{y}\Big{(}\s(u)\v,\c(u)\v\Big{)}\Big{(}-\c^{2}(u)+2\s(u)\c(u)\Big{)}\v;\\
\Omega_{\v}(u,\v)&=&\lambda_{x}\Big{(}\s(u)\v,\c(u)\v\Big{)}\s(u)
+\lambda_{y}\Big{(}\s(u)\v,\c(u)\v\Big{)}\c(u).
\end{eqnarray*}
Therefore, according to (\ref{parteq}), we have
\begin{eqnarray*}
\v\Omega_{u}(u,\v)+\v\Omega_{\v}(u,\v)=\Omega(u,\v).
\end{eqnarray*}
The general solution of this linear PDE is given by $H(u-\v)\v$, for any $H$, which is smooth in $\mathbb{C}$, except possibly at the poles. The boundary condition (\ref{mod-bound}) requires that $H(u)=\s(u)$.
So,
\begin{eqnarray*}
\Omega(u,\v)=\lambda\Big{(}\s(u)\v,\c(u)\v\Big{)}=\s(u-\v)\v.
\end{eqnarray*}
Recall that $x=\s(u)\v$, $y=\c(u)\v$. Then $xy(x-y)=\v^3$, and thus $\v=[xy(x-y)]^{1/3}$.
Let $S=\s(u)$, $C=\c(u)$, $a=\s(-\v)$, $b=\c(-\v)$. Then using Proposition \ref{basic-sc}, we get
\begin{eqnarray*}
\s(u-\v)=\frac{(b+C-abSC)^{2}aS}{(1-a^2bS^2C)(aSC+abS-1)}.
\end{eqnarray*}
Replace in the latter expression $S=x\v^{-1}$, $C=y\v^{-1}$. We get
\begin{eqnarray*}
\s(u-\v)=\frac{(b\v^2+y\v-abxy)^{2}ax}{(\v^3-a^2bx^2y)(axy+abx\v-\v^2)}.
\end{eqnarray*}
Further, let $A=\s(\v)$, $B=\c(\v)$. Again, from Proposition \ref{basic-sc} we know that $a=A$, $b=(AB)^{-1}$. Rewriting the above in terms of $A$, $B$, we get
\begin{eqnarray}
\s(u-\v)=\frac{(\v^2+ABy\v-Axy)^{2}x}{A(B\v^3-Ax^2y)(ABxy+x\v-B\v^2)}.
\label{analo}
\end{eqnarray}
Finally, let $\sm(\v)=s$, $\cm(\v)=c$. So, $A=-s^{2}c^{-1}$, $B=c^2s^{-1}$. This gives
\begin{eqnarray*}
\s(u-\v)\v=\frac{(c\v^2-sc^2y\v+s^2xy)^{2}x\v}
{(c^3\v^3+s^3x^2y)(s^2cxy-sx\v+c^2\v^2)}.
\end{eqnarray*}
Since $x^2y-xy^2=\v^3$, the last simplifies to
\begin{eqnarray*}
\lambda(x,y)=\s(u-\v)\v=\frac{\v(c\v^2-sc^2y\v+s^2xy)^{2}}
{y(x-c^3y)(c^2\v^2-sx\v+s^2cxy)}.
\end{eqnarray*}
This is exactly the statement of Theorem \ref{thm2}.\end{proof}
\subsection{Rational and algebraic interpretation of $\lambda$}
\label{sub:rat}
We now proceed in showing that the function $\lambda(x,y)$ can be described purely algebraically without appeal to elliptic functions, which, most importantly, leads us to the definition of projective quasi-rational flows of arbitrary genus; see the Subsection \ref{sub-qf}. Let $\v$ be as in the previous Subsection, but let us rewrite the formula (\ref{analo}) not in terms of $A$ and $B$ as given there, but (as mentioned in the end of the Subsection \ref{sub:ann}) in terms of $A=\s(\v)\v$, $B=\c(\v)\v$, which is the most natural. We thus have
\begin{eqnarray}
\lambda(x,y)=\frac{[x(x-y)+AB-Ax]^2y(x-y)}
{A[B(x-y)-Ax][AB+x(x-y)-B(x-y)]}=\mathscr{T}(A,B;x,y).
\label{quasii}
\end{eqnarray}
Note that if $A,B,x,y$ are all given a weight $1$, then $\mathscr{T}(A,B;x,y)$ is a $1$-homogenic function. 
The boundary condition property (\ref{boundd}) reads as
\begin{eqnarray}
\lim\limits_{z\rightarrow 0}\frac{\mathscr{T}(-z^{3}xy(x-y),1;xz,yz)}{z}=x.\label{tbound}
\end{eqnarray}
The PDE (\ref{parteq}) can be rewritten in terms of the rational function $\mathscr{T}$. Indeed,
\begin{eqnarray*}
\v_{x}=\frac{2xy-y^2}{3\v^2},& & \v_{y}=\frac{x^2-2xy}{3\v^2},\\
A_{x}=[\s'(\v)\v+\s(\v)]\v_{x},& & B_{x}=[\c'(\v)\v+\c(\v)]\v_{x},\\
A_{y}=[\s'(\v)\v+\s(\v)]\v_{y},& & B_{y}=[\c'(\v)\v+\c(\v)]\v_{y}.
\end{eqnarray*}
Thus,
\begin{eqnarray}
x\v_{x}+y\v_{y}=\v,\quad \v_{x}(x^2-2xy)+\v_{y}(y^2-2xy)=0.
\label{homo}
\end{eqnarray}
The PDE (\ref{parteq}) now read as
\begin{eqnarray*}
0&=&\lambda_{x}(x^2-2xy-x)+\lambda_{y}(y^2-2xy-y)+\lambda(x,y)\\
&=&\mathscr{T}_{A}[\s'(\v)\v+\s(\v)]\v_{x}(x^2-2xy-x)+\mathscr{T}_{B}[\c'(\v)\v+\c(\v)]\v_{x}(x^2-2xy-x)\\
&+&\mathscr{T}_{x}(x^2-2xy-x)\\
&+&\mathscr{T}_{A}[\s'(\v)\v+\s(\v)]\v_{y}(y^2-2xy-y)+\mathscr{T}_{B}[\c'(\v)\v+\c(\v)]\v_{y}(y^2-2xy-y)\\
&+&\mathscr{T}_{y}(y^2-2xy-y)+\mathscr{T}\\
&=&-\mathscr{T}_{A}[\s'(\v)\v+\s(\v)]\v-\mathscr{T}_{B}[\c'(\v)\v+\c(\v)]\v\\
&+&\mathscr{T}_{x}(x^2-2xy-x)+\mathscr{T}_{y}(y^2-2xy-y)+\mathscr{T}\\
&=&\mathscr{T}_{A}(A^2-2AB-A)+\mathscr{T}_{B}(B^2-2AB-B)\\
&+&\mathscr{T}_{x}(x^2-2xy-x)+\mathscr{T}_{y}(y^2-2xy-y)+\mathscr{T};
\end{eqnarray*}
in the penultimate equality we used (\ref{homo}), and in the last one Proposition \ref{basic-sc} was used.
We get that the PDE for the rational function $\mathscr{T}$ is just an amalgam of two identical PDE's. Each of them separately is solvable in terms of elliptic functions, while their amalgam gives a rational solution! Since $\mathscr{T}$ is $1$-homogenic, we seem to obtain the $4$-variable PDE 
\begin{eqnarray}
\mathscr{T}_{A}(A^2-2AB)+\mathscr{T}_{B}(B^2-2AB)
+\mathscr{T}_{x}(x^2-2xy)+\mathscr{T}_{y}(y^2-2xy)\mathop{=}^{?}0.\label{quaq}
\end{eqnarray}
However, computer calculations show that this is NOT the identity satisfied by the rational function $\mathscr{T}$ - that is the meaning of the question mark. In fact, the four variables $A,B,x,y$ are not independent but satisfy $AB(A-B)=xy(x-y)$. Now, calculations prove that this PDE is indeed satisfied modulo $AB(A-B)-xy(x-y)$. We thus get the needed property: for free variables $A,B,x,y$, one has
\begin{eqnarray*}
\text{l.h.s. of }(\ref{quaq})\equiv 0\text{ mod }\Big{(}AB(A-B)-xy(x-y)\Big{)}.
\end{eqnarray*}
This shows that the first coordinate of the flow $\Lambda$, now in the avatar $\mathscr{T}$, can be described purely algebraically as follows: it is $1$-homogenic, it satisfies the boundary condition (\ref{tbound}), and the $4$-variable PDE in the quotient ring. This leads us to the definition of quasi-rational flows in the Subsection  \ref{sub-qf}.\\

Moreover, we can give (\ref{quaq}) another appearance by eliminating one variable; say, $A$. First, by the homogeneity property,
\begin{eqnarray*} 
A\mathscr{T}_{A}=\mathscr{T}-B\mathscr{T}_{B}-x\mathscr{T}_{x}-y\mathscr{T}_{y}.
\end{eqnarray*}
Now plug this into (\ref{quaq}). The derivative with respect to the variable $A$ is not present any more, and we can freely choose $A=1$. We thus obtain the equation
\begin{eqnarray} 
\mathscr{T}_{B}(3B^2-3B)+\mathscr{T}_{x}(x^2-2xy-x+2Bx)
+\mathscr{T}_{y}(y^2-2xy-y+2By)=(2B-1)\mathscr{T},
\label{tfun}
\end{eqnarray}
which is valid in the quotient ring $\mathbb{C}(B;x,y)/\big{(}B(1-B)-xy(x-y)\big{)}$. So, let $B=B(x,y)$ be the solution of $B(1-B)=xy(x-y)$, which we can express in quadratic radicals:
\begin{eqnarray*}
B(x,y)=\frac{1}{2}+\frac{1}{2}\sqrt{1-4xy(x-y)}.
\end{eqnarray*}
Let therefore
\begin{eqnarray*}
\mathcal{E}(x,y)=\mathscr{T}(1,B(x,y);x,y)
\end{eqnarray*}
be an algebraic function of two free variables. This also satisfies the PDE, as we will now see. First, we have
\begin{eqnarray*}
B_{x}=\frac{2xy-y^2}{1-2B},\quad B_{y}=\frac{x^2-2xy}{1-2B}.
\end{eqnarray*}
Therefore,
\begin{eqnarray*}
\mathcal{E}_{x}=\mathscr{T}_{B}\frac{2xy-y^2}{1-2B}+\mathscr{T}_{x},\quad
\mathcal{E}_{y}=\mathscr{T}_{B}\frac{x^2-2xy}{1-2B}+\mathscr{T}_{y}.
\end{eqnarray*}
So,
\begin{eqnarray*}
\mathcal{E}_{x}(x^2-2xy)+\mathcal{E}_{y}(y^2-2xy)&=&\mathscr{T}_{x}(x^2-2xy)+\mathscr{T}_{y}(y^2-2xy),\\
\mathcal{E}_{x}x(1-2B)+\mathcal{E}_{y}y(1-2B)&=&\mathscr{T}_{x}x(1-2B)+\mathscr{T}_{y}y(1-2B)+
\mathscr{T}_{B}(3x^2y-3xy^2).
\end{eqnarray*}
Subtract now second from the first. This, using (\ref{tfun}), implies (in an unconventional form)
\begin{eqnarray*} 
\frac{\mathcal{E}_{x}(x^2-2xy)+\mathcal{E}_{y}(y^2-2xy)}{\mathcal{E}-x\mathcal{E}_{x}-y\mathcal{E}_{y}}=2B-1=\sqrt{1-4xy(x-y)}.
\end{eqnarray*}
The validity of this identity was also double-checked on MAPLE, and it holds true; both numerator and denominator on the left side are rather complicated expressions.
\subsection{The group structure of $E(c)$ and its effect on the flow $\Lambda$}
\label{sub:group}
 Let, as before, $\omega=e^{2\pi i/3}$. If $c\neq 0$, the projective transformation
\begin{eqnarray*}
X=cr-q,\quad Y=-cr-q,\quad Z=2p
\end{eqnarray*}
maps the curve $XY(X-Y)=cZ^3$ to the elliptic curve \cite{knapp}
\begin{eqnarray*}
\widehat{E}(c)=\{(p:q:r)\in P^{2}(\mathbb{C}):q^2r=4p^3+c^2r^3\}.
\end{eqnarray*}
If $\mathbf{P}=(X:Y:1)\in E(c)$, then $-\mathbf{P}=(-Y:-X:1)$.
Further, let $\m{P}_{1}=(X_{1}:Y_{1}:1)$ and $\m{P}_{2}=(X_{2}:Y_{2}:1)$ be two (finite) points on $E(c)$. The standard addition formulas for the curve $\widehat{E}(c)$ translate onto $E(c)$ as follows. Let $\m{P}_{3}=\m{P}_{1}+\m{P}_{2}=(X_{3}:Y_{3}:Z_{3})$. Then
\begin{eqnarray*}
X_{3}&=&(-X_{1}Y_{1}+X_{1}^{2}+X_{2}Y_{2}-X_{2}^{2})(Y_{1}-Y_{2})^{2},\\
Y_{3}&=&(X_{1}Y_{1}-Y_{1}^{2}-X_{2}Y_{2}+Y_{2}^{2})(X_{1}-X_{2})^2,\\
Z_{3}&=&(X_{1}-X_{2})(Y_{1}-Y_{2})(X_{1}-Y_{1}+Y_{2}-X_{2}).
\end{eqnarray*}
This can be given an alternative expression. Using $X_{1}Y_{1}(X_{1}-Y_{1})=c$ and  $X_{2}Y_{2}(X_{2}-Y_{2})=c$, we can rewrite this as
\begin{eqnarray*}
X_{3}&=&(Y_{1}-Y_{2})^{3}X_{1}X_{2},\\
Y_{3}&=&(X_{1}-X_{2})^3Y_{1}Y_{2},\\
Z_{3}&=&(X_{1}-X_{2})(Y_{1}-Y_{2})(X_{1}Y_{1}-X_{2}Y_{2}).
\end{eqnarray*}
The duplication formula read as follows. If $\m{P}=(X:Y:1)$, then $2\m{P}=(X_{1}:Y_{1}:Z_{1})$, where
\begin{eqnarray*}
X_{1}&=&(2X-Y)^{3}Y,\\
Y_{1}&=&(2Y-X)^{3}X,\\
Z_{1}&=&(X+Y)(2X-Y)(2Y-X).
\end{eqnarray*}
Now we will introduce few special points, which give the cyclic $6$-group on the curve $E(c)$, as presented in the Table \ref{table2}.
\begin{table}[h]
\begin{tabular}{|| c | c l| c l ||}
Order & $\widehat{E}(c)$ & $(p:q:r)$ & $E(c)$ & $(X:Y:Z)$\\
1 & $\widehat{\m{O}}$ & $(0:-1:0)$     & $\m{O}$ & $(1:1:0)$ \\
2&$\widehat{\m{Q}}_{2}$ & $(-\sqrt[3]{c^2/4}:0:1)$
&$\m{Q}_{2}$ & $(-\sqrt[3]{c/2}:\sqrt[3]{c/2}:1)$.\\
3 & $\widehat{\m{Q}}_{3}$ & $(0:c:1)$   & $\m{Q}_{3}$ & $(0:1:0)$\\
3 & $2\widehat{\m{Q}}_{3}$ & $(0:-c:1)$ & $2\m{Q}_{3}$ & $(1:0:0)$.\\
6 & $\widehat{\m{Q}}_{6}$ & $(\sqrt[3]{2c^2}:3c:1)$ &$\m{Q}_{6}$ & $(-\sqrt[3]{c/2}:-\sqrt[3]{4c}:1)$. \\
6 & 5$\widehat{\m{Q}}_{6}$ & $(\sqrt[3]{2c^2}:-3c:1)$ &$5\m{Q}_{6}$ &
$(\sqrt[3]{4c}:\sqrt[3]{c/2}:1)$.
\end{tabular}
\caption{Finite order points on elliptic curves $\widehat{E}(c)$ and $E(c)$.}
\label{table2}
\end{table}
Note that for any $c\neq 0$ these $6$ points are different. The addition formula below give
\begin{eqnarray*}
\m{Q}_{6}+\m{Q}_{2}=2\m{Q}_{3},\quad 5\m{Q}_{6}+\m{Q}_{2}=\m{Q}_{3},\quad \m{Q}_{3}=2\m{Q}_{6}. 
\end{eqnarray*} 
So, we deduce that $\m{Q}_{2}=3\m{Q}_{6}$. Let this group be $C_{6}$. When $c$ is complex, we fix the cubic root $\sqrt[3]{c}$.  We can also consider a wider group. Let $\m{Q}^{\omega}_{6}=(-\omega\sqrt[3]{c/2}:-\omega\sqrt[3]{4c}:1)$, $\m{Q}^{\omega^{2}}_{6}=(-\omega^{2}\sqrt[3]{c/2}:-\omega^{2}\sqrt[3]{4c}:1)$. The addition formulas give
\begin{eqnarray*}
\m{Q}_{6}+\m{Q}^{\omega}_{6}+\m{Q}^{\omega^{2}}_{6}&=&\m{O},\\
\m{Q}_{6}-\m{Q}^{\omega}_{6}&=&\m{Q}^{\omega^{2}}_{2}=(-\omega^{2}\sqrt[3]{c/2}:\omega^{2}\sqrt[3]{c/2}:1),\\
\m{Q}_{6}-\m{Q}^{\omega^{2}}_{6}&=&\m{Q}^{\omega}_{2}=(-\omega\sqrt[3]{c/2}:\omega\sqrt[3]{c/2}:1).
\end{eqnarray*}
So, these points produce the group $C_{12}$ which is isomorphic to $\mathbb{Z}_{6}\times\mathbb{Z}_{2}$ via the following (non-canonical) isomorphism:
\begin{eqnarray*}
\m{Q}_{6}\mapsto(1,0),\quad \m{Q}^{\omega}_{6}\mapsto(1,1),\quad
\m{Q}^{\omega^{2}}_{6}\mapsto (4,1),\quad \m{Q}^{\omega^{2}}_{2}\mapsto (0,1),\quad
\m{Q}^{\omega}_{2}\mapsto (3,1).
\end{eqnarray*}
From the property (ELL) we know that if $\m{P}=(x:y:1)\in E(c)$, then 
\begin{eqnarray*}
\Lambda^{z}(\mathbf{P})=(\lambda^{z}(x,y):\lambda^{z}(y,x):1)\in E(c).
\end{eqnarray*}
Further, for the three points at infinity - $\m{O}$, $\m{Q}_{3}$ and $2\m{Q}_{3}$ - the action of $\Lambda$ can be calculated using the Corollary \ref{isvada}. Here we use its notation. Indeed, assume $A,B\neq 0$, $A^{3}c+B^{3}=1$. Let  now $(x,y)\in E(c)$, and $x,y\rightarrow\infty$ while remaining on the curve (in case $c>0$ is real, this corresponds to the left-bottom asymptote in the Figure \ref{figure2}). Then $(x:y:1)\rightarrow\m{O}$. Thus, using the identity $(x-y)=c(xy)^{-1}$, we can calculate
\begin{eqnarray*}
\Lambda(\m{O})=\lim_{\substack{x\sim\,y\rightarrow\infty\\x-y\sim\,cx^{-2}}}
\mathscr{R}(A,B;x,y)\bl\mathscr{R}\Big{(}\frac{A}{B},\frac{1}{B};y,x\Big{)}=\frac{1}{AB}\bl \frac{B^2}{A}=\m{F}_{c}\in E(c),
\end{eqnarray*}
where $\m{F}_{c}$ is the special point on the curve $E(c)$ as definition of $A$ and $B$ shows. In the same manner,
\begin{eqnarray*}
\Lambda(\m{Q}_{3})=-\frac{cA^{2}}{B}\bl -\frac{1}{AB},\quad \Lambda(2\m{Q}_{3})=-\frac{B^2}{A}\bl \frac{cA^2}{B}.
\end{eqnarray*}
In case $A=0$ the calculations give
\begin{eqnarray*}
\Lambda(\m{O})=\m{O},\quad \Lambda(\m{Q}_{3})=\m{Q}_{3},\quad \Lambda(2\m{Q}_{3})=2\m{Q}_{3},
\end{eqnarray*}
and in case $B=0$ we have
\begin{eqnarray*}
\Lambda(\m{O})=2\m{Q}_{3},\quad \Lambda(\m{Q}_{3})=\m{O},\quad \Lambda(2\m{Q}_{3})=\m{Q}_{3}.
\end{eqnarray*}
The order $3$ linear map $(x,y)\mapsto (y-x,-x)$ corresponds to $\m{P}\mapsto\m{P}+\m{Q}_{3}$. If we know already that $\Lambda(\m{x})=(\lambda(x,y),\lambda(y,x))$, the symmetry property (SYMM) now can be checked to be equivalent to the following nice identity:
\begin{eqnarray*}
\Lambda(\m{P}+\m{Q}_{3})=\Lambda(\m{P})+\m{Q}_{3}\text{ for every }\m{P}\in E(c).
\end{eqnarray*}
\subsection{The case of a vector field $\varpi(x,y)\bl 0$}
\label{sub-zero}
In this subsection we will show how to find all unramified flows in case in the PDE (\ref{g-parteq}) we have $\varrho(x,y)\equiv 0$; see the first paragraph of the Subsection \ref{sub-quadratic} for the explanation. It appears that no new unramified flows appear, apart from those found in \cite{alkauskas2}, Chapter 4, Step II. It was shown there that the solution $u(x,y)$ to (\ref{g-parteq}) in case $\varrho(x,y)\equiv 0$ satisfies
\begin{eqnarray*}
\int\limits_{y/u(x,y)}^{y/x}\frac{\d t}{\varpi(1,t)}=y.
\end{eqnarray*} 
Put $y=1$, and let $q(x)=\frac{1}{u(x^{-1},1)}$. Then (after a change $x\mapsto x^{-1}$) we get that one needs to find all rational functions $\varpi(1,t)$ such that the function $q(x)$, $x\in\mathbb{C}$, defined by 
\begin{eqnarray*}
\int\limits_{q(x)}^{x}\frac{\d t}{\varpi(1,t)}=1,
\end{eqnarray*} 
is single-valued. This is equivalent to the following: for all 
$A,B\in\mathbb{C}$ satisfying
\begin{eqnarray}
\int\limits_{A}^{B}\frac{\d t}{\varpi(1,t)}=0,
\label{lygybe}
\end{eqnarray}
this condition forces $A=B$; the integral is taken via any path only avoiding singularities. Suppose $\varpi(1,t)=k(t)\ell(t)^{-1}$, where $k(t),\ell(t)$ are polynomials and the fraction is irreducible over $\mathbb{C}$. If $\deg(\ell)>0$, let $t_{0}$ be the root of $\ell(t)$, say, of multiplicity $n_{0}\geq 1$. Let us define
\begin{eqnarray*}
T(x)=\int\limits_{t_{0}}^{x}\frac{\d t}{\varpi(1,t)},
\end{eqnarray*}
where $|x-t_{0}|<\epsilon$, $\epsilon$ is sufficiently small such that $\varpi(1,t)^{-1}$ has no poles or zeros inside this disc except at the center, and the path of integration is a segment. Then the function $T(x)$ is well defined and, most importantly, it has a zero at $x=t_{0}$ of multiplicity $n_{0}+1\geq 2$. So, we know that there exists a small open set $U\subset\{x:|x-t_{0}|<\epsilon\}$ containing $t_{0}$ such that in $U$ the function $T(x)$ attains each value exactly $n_{0}+1$ times, $T(t_{0})=0$ being the only multiple value \cite{shabat}. In particular, if $T(A)=T(B)$, $A\neq B$, $A,B\in U\setminus\{t_{0}\}$, then (\ref{lygybe}) holds; the path of integration consists of a junction of two segments $[A,t_{0}]$ and $[t_{0},B]$. We get a contradiction. Suppose now $\ell(t)\equiv 1$, and so $\varpi(1,t)$ is a polynomial. If we perform the change of variables $t\mapsto \frac{1}{t}$ in (\ref{lygybe}), we would obtain (by what was just proved) the neccessary condition that $\varpi(1,\frac{1}{t})t^2$ is also a polynomial. So, $\deg\varpi(1,t)\leq 2$. In \cite{alkauskas2} we have explored all these cases; in particular, $\varpi(1,t)=t$ and $\varpi(1,t)=t^2+1$ produce the unramified flows $\phi^{\rm exp}$ and $\phi^{{\rm tan}}$, the case $\varpi(1,t)=t+1$ produces the flow linearly conjugate to $\phi^{{\rm tan}}$, while $\varpi(1,t)=1$ and $\varpi(1,t)=t^2$ give rational flows.
\subsection{Other two elliptic flows}
\label{two-other}
It can be seen that all the results, apart from the $\Lambda(x,y)$-specific symmetry property (SYMM), can be restated in terms of the flows with the vector fields $x^2-3xy\bl y^2-3xy$ and $x^2-xy\bl y^2-2xy$, respectively; see Theorem \ref{thm1}, items 5) and 6), and also the Subsection \ref{sub-quadratic}. Consider the first vector field. Let, as already defined in Theorem \ref{thm1}, the solution to (\ref{g-parteq}) with the boundary condition (\ref{bound}) be given by $\psi(x,y)\bl\psi(y,x)$. The orbits of this flow are the curves $xy(x-y)^2=c$. Consider the projective curve $xy(x-y)^2=cz^4$, $c\neq 0$. If we make a birational change  $(x:y:z)\mapsto(\frac{z^2}{x}+y:y:z)$, we see that these curves are birationally equivalent to elliptic curves. So, let us introduce two new elliptic functions $p(u)$ and $q(u)$ with such properties. The Taylor series for both these functions start at $p(u)=u^{3}+\cdots$ and $q(u)=u^{-1}+\cdots$, and they contains only the powers $u^{4n-1}$, $n\geq 1$ and $n\geq 0$, respectively. Further, we require
\begin{eqnarray*}
\left\{\begin{array}{c@{\qquad}c}
1\equiv p(u)q(u)[p(u)-q(u)]^2,\\
p'(u)=-p^{2}(u)+3p(u)q(u),\\
q'(u)=-q^{2}(u)+3p(u)q(u).
\end{array}\right.
\end{eqnarray*} 
Using these properties, one can recurrently calculate the unique Taylor coefficients, and we do it with the help of MAPLE:
\begin{eqnarray*}
p(u)&=&u^3+\frac{1}{5}u^{7}+\frac{2}{25}u^{11}+\frac{127}{4875}u^{15}
+\frac{246}{27625}u^{19}+\frac{1246}{414375}u^{23}
+\frac{1234412}{1212046875}u^{27}+\cdots,\\
q(u)&=&u^{-1}+\frac{3}{5}u^{3}+\frac{17}{75}u^{7}+\frac{126}{1625}u^{11}
+\frac{32639}{1243125}u^{15}+\frac{6138}{690625}u^{19}
+\frac{42898}{14259375}u^{23}+\cdots.\\
\end{eqnarray*}
Let $x=p(u)\v$, $y=q(u)\v$, $\v=[xy(x-y)^2]^{1/4}$. Similarly as in the Subsection \ref{subsol}, we find that the solution of (\ref{g-parteq}) in case of the vector field $x^2-3xy\bl y^2-3xy$ satisfies
\begin{eqnarray*}
\psi(x,y)=\psi\Big{(}p(u)\v,q(u)\v\Big{)}=p(u-\v)\v.
\end{eqnarray*}
Let $A=p(\v)\v$, $B=q(\v)\v$. We can act completely analogously as in the Subsection \ref{subsol}, i.e. one can derive addition formulas for the elliptic functions $p$ and $q$ and then use them in the above expression. This gives the analogue of Theorem \ref{thm2} in this case, and we omit the details. \\

Equally, the vector field $x^2-xy\bl x^2-2xy$ has projective curves $(3x-2y)x^3y^2=cz^6$ as orbits. The projective birational change $(x:y:z)\mapsto (\frac{z^2}{x}:\frac{x^2}{y}:z)$ transforms this curve into $3z^2y-2x^3=cy^3$, and thus for $c\neq 0$ this is an elliptic curve, and so we can derive analogous results as in the other two cases.
\section{Open directions, further advances}
\label{sec:further} 
We finish this study with listing the variety of ways and points of view the multivariate PrTE can be explored from. If we pose a problem, it does not necessarily mean that this problem is hard - though it might be - or has a positive answer; the aim of this section is mainly expository: we just wish to exhibit the richness and structural variety that underlies (\ref{funkk}).
\subsection{Higher dimensional generalization of $\lambda$}\label{dim}
We will now define an analogue of the flow $\Lambda(\m{x})$ in higher dimensions. Let $N\in\mathbb{N}$, $N\geq 2$. The symmetric group $S_{N+1}$ has the standard $(N+1)$-dimensional permutation representation inside the group ${\rm GL}(\mathbb{C}^{N+1})$. The invariant subspace of this representation is the line $\mu\cdot(1,1,\ldots,1)$, $\mu\in\mathbb{C}$. So, $S_{N+1}$ acts on the orthogonal complement $\mathcal{A}\subset\mathbb{C}^{N+1}$ of this line which is given by
\begin{eqnarray*}
\mathcal{A}=\Big{\{}(v_{1},v_{2},\ldots, v_{N+1})\in\mathbb{C}^{N+1}:
\sum\limits_{i=1}^{N+1}v_{i}=0\Big{\}}.
\end{eqnarray*}
Let $\pi:S_{N+1}\mapsto{\rm GL}(\mathcal{A})$
be this representation. It is well known that it is irreducible and exact. Let us choose the basis of $\mathcal{A}$ as follows:
\begin{eqnarray*}
\mathbf{q}_{i}=(0,\ldots,0,\mathop{1}_{i},0,\ldots,0,-1),\quad
1\leq i\leq N.
\end{eqnarray*}
Each transposition of the form $(ij)\in S_{N+1}$, $1\leq i<j\leq N$, acts on vectors $\mathbf{q}_{i},1\leq i\leq N$, as the transposition $(\mathbf{q}_{i}\mathbf{q}_{j})$. On the other hand, if $\eta\in S_{N+1}$ is the transposition $(1(N+1))$, then the matrix representation of the linear map $\pi(\eta)$ in basis $(\mathbf{q}_{1},\ldots,\mathbf{q}_{N})^{T}$ is as follows:
\begin{eqnarray*}
\kappa=\begin{pmatrix}
-1 &   &   &   &  \\
-1 & 1 &   &   &  \\
-1 &   & 1 &   &  \\
\,\,\,\vdots & &   & \ddots &  \\
-1 &   &  &  & 1
\end{pmatrix};
\end{eqnarray*}
(matrices act on vector-columns by multiplication from the left). For example, when $N=2$, this corresponds to the matrix $\sigma\tau\sigma$, see (SYMM). Let $\Sigma_{N+1}=\pi(S_{N+1})$. For each $\gamma\in\Sigma_{N+1}$, we henceforth consider $\gamma$ as a matrix in the fixed basis $\{\mathbf{q}_{1},\ldots,\mathbf{q}_{N}\}$. Now, we want to find an $N$-tuple of quadratic forms
\begin{eqnarray*}
\mathbf{Q}(\m{x})=\big{(}Q_{1}(\mathbf{x}),Q_{2}(\mathbf{x}),\ldots,Q_{N}(\mathbf{x})\big{)},
\end{eqnarray*}
such that if $\Gamma=\Sigma_{N+1}$, we have
\begin{eqnarray}
\gamma^{-1}\circ\mathbf{Q}\circ\gamma(\mathbf{x})=
\mathbf{Q}(\mathbf{x})\label{kappa}
\end{eqnarray}
for each $\gamma\in\Gamma$. It is clear from the above that the vector $\mathbf{Q}$ should be symmetric with respect to all coordinates. This means
\begin{eqnarray}
\left\{\begin{array}{@{\qquad}l} Q_{i}(x_{1},\ldots,x_{i},\ldots,x_{N})=
Q_{1}(x_{i},\ldots,x_{1},\ldots,x_{N}),\quad 1\leq i\leq N,
\\ Q_{j}(x_{1},\ldots,x_{i},\ldots,x_{N})=
Q_{j}(x_{i},\ldots,x_{1},\ldots,x_{N}),\text{ for }j\neq 1,i. \end{array}\right.\label{symm}
\end{eqnarray}
Since transpositions $(ij)$, $1\leq i<j\leq N$, together with the transposition $(1(N+1))$ generate the whole group $S_{N+1}$, it is enough to find a quadratic form $Q_{1}(x_{1},\ldots,x_{N})$ such that if $Q_{i}$ are defined by the first entry of (\ref{symm}) and they all satisfy the second entry, then the vector $\mathbf{Q}$ satisfies (\ref{kappa}) in a special case $\gamma=\kappa$. This is a linear algebra task, and we find that the solution is given by
\begin{eqnarray*}
Q_{1}(\m{x})=x_{1}^{2}-\frac{2}{N-1}\cdot x_{1}
\sum\limits_{i=2}^{N}x_{i}.
\end{eqnarray*}So, we pose the following
\begin{prob}
Let $Q_{1}$ be as above, and $Q_{i}$ are given by (\ref{symm}). Let $\Lambda_{N}(\m{x})=(\lambda^{(1)},\ldots,\lambda^{(N)})$, where $\lambda^{(i)}(\m{x})$ is the solution of the PDE
\begin{eqnarray*}
\sum\limits_{i=1}^{N}f_{x_{i}}(\m{x})[Q_{i}(\m{x})-x_{i}]=-f(\m{x})\label{parteqN}
\end{eqnarray*}
with the boundary condition
\begin{eqnarray*}
\lim\limits_{z\rightarrow 0}\frac{f(\m{x}z)}{z}=x_{i}.
\end{eqnarray*}
(Note that only the boundary condition depends on $i$). Describe the algebraic and analytic properties of the flow $\Lambda_{N}(\m{x})$.
\end{prob}
We call the function $\Lambda_{N}(\m{x})$ \emph{the $\Sigma_{N+1}$-superflow}. The solution of this problem might reveal deeper connections with various objects encountered in algebraic geometry and number theory. More generally, we may pose the same problem not only for $\Sigma_{N+1}\subset{\rm GL}(\mathcal{A})$, but also for any finite (sufficiently large, as compared to the degree of representation) subgroup of the linear group. The results in the Subsecion \ref{sub-symmetry} show that this might be interpreted as symmetry identities for certain rational functions in the quotient rings.
\begin{defin}
Let $N\in\mathbb{N}$, $N\geq 2$, and $\Gamma\hookrightarrow{\rm GL}(\mathbb{C}^{N})$ be an exact representation of a finite group, and we identify $\Gamma$ with the image. We call the flow $\phi(\m{x})$ \emph{the $\Gamma$-superflow}, if there exists a vector field $\mathbf{Q}(\m{x})\neq (0,0,\ldots,0)$ whose components are $2$-homogenic rational functions and which is exactly the vector field of the flow $\phi(\m{x})$, such that (\ref{kappa}) is satisfied for all $\gamma\in\Gamma$, every other vector $\mathbf{Q}'$ which satisfies (\ref{kappa}) for all $\gamma\in\Gamma$ is either a scalar multiple of $\mathbf{Q}$, or  its degree of a common denominator is higher. Thus, $\mathbf{Q}$ is uniquely defined up to conjugation with a linear map $\m{x}\mapsto z\m{x}$.
\end{defin} 
Note that we do not require the components of $\mathbf{Q}(\m{x})$, which is the vector field of the flow $\phi$,  to be quadratic forms, they are just $2$-homogenic rational functions. The main reason is obvious - the flow $\phi$ might possess lots of symmetries while $\ell^{-1}\circ\phi\circ\ell$ might not, even if we confine to special $1$-BIRs $\ell$. Another reason for this is that \emph{a priori} there is no reason why for every vector field $\mathbf{Q}$ there must exists an $N$-dimensional $1$-BIR $\ell$ such that the vector field of $\ell^{-1}\circ\phi\circ\ell$ has only quadratic forms as its components. We know that even in a $2$-dimensional case the reduction is rather complicated, and there exists a particular obstruction where such $\ell$ does not exist (\cite{alkauskas2}, Section 4).
\begin{prob}
Describe all groups $\Gamma$ inside ${\rm GL}(\mathbb{C}^{N})$ for which there exists a $\Gamma$-superflow. Describe the properties of such a flow $\phi(\m{x})=\phi_{\Gamma}(\m{x})$ algebraically and analytically. 
\end{prob}
For example, the group $S_{4}$ has two non-equivalent exact irreducible $3$-dimensional representations. The first is constructed in the beginning of this subsection, while the second one, call it $\Sigma'_{4}$, arises as the group of rotations of a cube, and it is given by the embedding
\begin{eqnarray*}
(12)\mapsto\begin{pmatrix}
0 & 1 & 0\\
1 & 0 & 0\\
0 & 0 & -1 
\end{pmatrix},\quad
(13)\mapsto\begin{pmatrix}
0 & 0 & 1\\
0 & -1 &0\\
1 & 0 & 0 
\end{pmatrix},\quad
(14)\mapsto\begin{pmatrix}
-1 & 0 & 0\\
0 & 0 & 1\\
0 & 1 & 0 
\end{pmatrix}.
\end{eqnarray*}
The Young diagram of this representation is dual to the Young diagram of the first representation. Unfortunately, the vector $\mathbf{Q}$ of quadratic forms which satisfies (\ref{kappa}) for all $\gamma\in\Sigma'_{4}$, is trivial. But there exists a unique, up to scalar multiple, $\mathbf{Q}$, which is invariant under even permutations, that is, the image of the group $A_{4}$. This vector field is given by
\begin{eqnarray}
\mathbf{Q}(\m{x})=(yz,xz,xy).
\label{pelican}
\end{eqnarray}
To check the invariance we note that this group contains the order $4$ subgroup consisting of the elements $I$, ${\rm diag}(-1,-1,1)$, ${\rm diag}(-1,1,-1)$, and ${\rm diag}(1,-1,-1)$ (the fourth Klein group). The invariance under conjugation with these transformations are immediate, and we only need to check invariance under one element of order three, say, the image of $(12)(13)=(123)$. Nevertheless, this vector field is also invariant under the matrix
\begin{eqnarray*}
\begin{pmatrix}
0 & 1 & 0\\
1 & 0 & 0\\
0 & 0 & 1 
\end{pmatrix},
\end{eqnarray*}
and adjoining this matrix to tha image of $A_{4}$ we thus obtain the $S_{4}$-superflow, which is linearly conjugate to the flow $\Lambda_{3}$, as given above. So, for the contragradient representation $\Sigma'_{4}$ we obtain no new superflow, if the vector field is given by the collection of quadratic forms. Such negative outcome happens for majority of finite subgroups of the full linear group, provided these groups  are large enough or have a particular structure. For example, already one condition (\ref{kappa}) for $\gamma=-I$ forces $\mathbf{Q}$ to be trivial. However, this is not the end of the story in the case of $\Sigma'_{4}$. One can check directly the the vector field
\begin{eqnarray*}
\mathbf{Q}(\m{x})=
\frac{y^3z-yz^3}{x^2+y^2+z^2}\bl\frac{z^3x-zx^3}{x^2+y^2+z^2}\bl\frac{x^3y-xy^3}{x^2+y^2+z^2}
\end{eqnarray*}
is invariant under conjugation with all matrices from $\Sigma'_{4}$. Up to the constant factor, this is the only vector field whose common numerator is of degree at most $2$; so we get a $\Sigma'_{4}$-superflow, essentially different form $\Sigma_{4}$-superflow. We leave this very promising side of investigations of the projective translation equation for the future. Concerning the vector field (\ref{pelican}), we could perform similar analysis as in the case of $\Lambda=\Lambda_{2}$. In fact, few initial steps are already contained in the literature - this vector field lies exactly in the intersection of P\'{o}lya urns (all components are monomials) and projective flows (they are $2$-homogenic). This urn model is called \emph{the pelican's urn}, it can be analytically solved in terms of Jacobian elliptic functions, and the combinatoric and analytic theory is very well developed and is profound \cite{dumont1, dumont3, dumont2, fdp, schett, viennot}. So, our contribution to this theory is the remark that this flow has a $24$-fold symmetry, and the next step would be to rewrite the analytic and symmetry results of the Subsections \ref{sub:ann} and \ref{sub-symmetry} in the case of the vector field $(\ref{pelican})$ purely algebraically. This seems to be very promising.

\subsection{Topology. Continuous flows on the sphere $\mathbb{S}^{k}$, $k\geq 3$.}Consider the functional equation (\ref{funkk}) over $\mathbb{R}$ in dimension $k\geq 3$, and suppose that a solution extends as a continuous function $\phi:\widehat{\mathbb{R}^{k}}\mapsto\widehat{\mathbb{R}^{k}}$, where the latter is a single point compactification of $\mathbb{R}^{k}$, this point being denoted by $``\infty"$; we make no further regularity assumptions on $\phi$.
\begin{prob}Is the following true? If the above assumption holds, then such $\phi$ is either given by $\phi(\m{x})=\m{x}$, $\phi(\m{x})=\m{0}$, $\phi(\m{x})=\infty$, or there exists a $1$-homogenic continuous function $\ell:\mathbb{R}^{k}\mapsto\mathbb{R}^{k}$ such that $\ell^{-1}\circ\phi\circ\ell$ is either $\phi_{1}$ or $\phi_{\infty}$, where the $j$-th coordinates of $\phi_{1}$ and $\phi_{\infty}$ are given, respectively, for $j=1,\ldots,k$, by
\begin{eqnarray*}
(\phi_{1}(\m{x}))_{j}=\frac{\sum\limits_{i=1}^{k}x^{2}_{i}+k\cdot x_{j}}
{\sum\limits_{i=1}^{k}(x_{i}+1)^2},\quad
(\phi_{\infty}(\m{x}))_{j}=d_{j}\Big{(}\sum\limits_{i=1}^{k}x_{i}\Big{)}^{2}+x_{j};
\end{eqnarray*}
here $\m{d}=(d_{1},d_{2},\ldots,d_{k})$ is a fixed in advance non-zero vector such that $\sum_{i=1}^{k}d_{i}=0$.
\end{prob}
The main result in \cite{alkauskas1} claims that this is true when $k=2$.
\subsection{Birational geometry. Rational flows in dimension $k\geq 3$.}As mentioned in (\cite{alkauskas2}, Subsection 5.6), in order to understand the algebro-geometric nature of projective flows, one needs to investigate the structure of rational solutions of (\ref{funkk}) in dimension $k\geq 3$ over $\mathbb{C}$ or any other algebraically closed field of characteristic $0$ (this is the easiest case of the theory). In particular, we pose
\begin{prob}Let $\phi(x,y,w)=u(x,y,w)\bl v(x,y,w)\bl t(x,y,w)\neq x\bl y\bl w$ be a triplet of rational functions such that $\phi$ satisfies (\ref{funkk}) and the boundary condition (\ref{bound}). Is it true that then there exists a $1$-homogenic birational transformation of $\mathbb{C}^{3}$, call it $\ell$, such that
\begin{eqnarray*}
\ell^{-1}\circ\phi\circ\ell(x,y,w)=x(w+1)^{N-1}\bl\frac{y}{w+1}\bl\frac{w}{w+1}
\end{eqnarray*}
for a certain non-negative integer $N$?
\end{prob}
This is certainly a hard problem, unless some more powerful techniques than the ones used in \cite{alkauskas2} are applied. Moreover, this problem, if solved, would exhibit only basic invariants of a $k$-dimensional rational projective flows. For example, even in dimension $2$ there are many interesting characteristics of projective flows, such as surface (compact or non-compact) on which a flow is naturally defined; symmetries; limits under conjugation; orbits; fixed points; homotopy of flows, and many more \cite{alkauskas3}.
%\subsubsection{Unramified flows in dimension $k\geq 3$.}
As a continuation of the previous problem and as a generalizaion of our Theorem \ref{thm1}, we pose
\begin{prob}Let $k\geq 3$. Classify all $k$-dimensional smooth projective flows such that they are non-singular, i.e. satisfy (\ref{bound}), they have rational vector field, and are unramified.
\end{prob}
\subsection{Quasi-rational flows of arbitrary genus}
\label{sub-qf}
The results of Subsections \ref{corro} and \ref{sub:rat} suggest that projective rational flows can be generalized purely algebraically without appeal to elliptic, abelian or other transcendental functions. We will define the notion of \emph{$2$-dimensional projective quasi-rational flow of genus $g$}, quasi-flow in short. Suppose, given a pair of $2$-homogenic rational functions $\varpi(x,y)$ and $\varrho(x,y)$. Suppose there exists a positive integer $N$ such that the ODE
\begin{eqnarray}
Nf(x)\varrho(x,1)
+f'(x)[\varpi(x,1)-x\varrho(x,1)]=0
\label{qua}
\end{eqnarray}
has a rational solution $f(x)$. Let $N$ be the smallest such integer, and let $\mathscr{W}(x,y)=y^{N}f(\frac{x}{y})=P(x,y)Q(x,y)^{-1}$, where $P$ and $Q$ are homogenic polynomials of degrees $N+d$ and $d$ for some $d\geq 0$, and their ratio is irreducible over $\mathbb{C}$. The orbits of the flow $\phi=u\bl v$ defined by (\ref{g-parteq}) and below are then the curves $\mathscr{W}(x,y)=c$, $c\in\mathbb{C}\cup\{\infty\}$. Of course, $\phi$ itself may turn out to be multi-valued analytic function with many branching points. Suppose the genus of the projective curve $P(x,y)=z^{N}Q(x,y)$ is $g\geq 0$. 
\begin{defin}Given a pair of $2$-homogenic rational functions $\varpi\bl\varrho$ such that the above assumption concerning ODE (\ref{qua}) does hold, $N$ being the smallest such positive integer. The rational function $U(A,B;x,y)$  is called \emph{a pre-quasi-flow of genus $g$} with vector field $\varpi\bl\varrho$, if
\begin{itemize}
\item[(i)]$U$ is $1$-homogenic:
\begin{eqnarray*}
U(zA,zB;zx,zy)=zU(A,B;x,y);
\end{eqnarray*}
\item[(ii)] The function $U$ satisfies the PDE in the quotient ring, i.e. the following holds:
\begin{eqnarray}
U_{A}\varpi(A,B)+U_{B}\varrho(A,B)+
U_{x}\varpi(x,y)+U_{y}\varrho(x,y)\nonumber\\
\equiv 0\text{ mod }\Big{(}P(A,B)Q(x,y)-P(x,y)Q(A,B)\Big{)}.
\label{u-quasi}
\end{eqnarray} 
%and the same holds for $V$; 
%\item[(iii)]
%they satisfy the boundary conditions similar to (\ref{tbound}).
%\begin{eqnarray*}
%\lim\limits_{z\rightarrow 0}\frac{U(A,B;xz,yz)}{z}=x,\quad
%\lim\limits_{z\rightarrow 0}\frac{V(A,B;xz,yz)}{z}=y.
%\end{eqnarray*}
\end{itemize}
\end{defin}
To define a \emph{quasi-flow}, not just a pre-quasi-flow, we need to specify the boundary conditions exactly, to define a pair of such functions $U(A,B;x,y)\bl V(A,B;x,y)$, both of which satisfy the above definition but which are subject to different boundary conditions. For this purpose we need to delve deeper into a certain system of ODE's. This topic is the central topic of the prospective paper \cite{alkauskas4}. Here we confine with the example $\mathscr{T}$ given in the Subsection \ref{sub:rat} and the examples to follow now. For example, it turns out that any rational flow gives rise to a quasi-flow of genus $0$. We will show this in case of the canonical flow $\phi_{N}$, see the Subsection \ref{back}; the general case will be treated in \cite{alkauskas3}. Since the vector field of the flow $\phi_{N}$ is $(N-1)xy\bl(-y^2)$, and $\mathscr{W}(x,y)=xy^{N-1}$, the general strategy of the Subsection \ref{sub:rat} shows that we need to solve the  system
\begin{eqnarray*}
\left\{\begin{array}{c@{\qquad}l}
a(u)b(u)^{N-1}=1,\\
a'(u)=-(N-1)a(u)b(u),\\
b'(u)=b^{2}(u).
\end{array}\right.
\end{eqnarray*}
So, we can choose $a=-u^{N-1}$, $b=-u^{-1}$. In our case $\v=(xy^{N-1})^{\frac{1}{N}}$. So, let $A=a(\v)\v=-xy^{N-1}$, $B=b(\v)\v=-1$.
Thus,
\begin{eqnarray*}
x(y+1)^{N-1}=-\frac{A(y-B)^{N-1}}{y^{N-1}}=U(A,B;x,y);\quad 
\frac{y}{y+1}=\frac{yB}{B-y}=V(A,B;x,y).
\end{eqnarray*}
We check that now the l.h.s. of (\ref{u-quasi}) in both cases is identically $0$, so the congruence holds all the more. The bondary conditions read as 
\begin{eqnarray*}
\lim\limits_{z\rightarrow 0}\frac{U(-z^{N}xy^{N-1},-1;xz,yz)}{z}=x,\quad \lim\limits_{z\rightarrow 0}\frac{V(-z^{N}xy^{N-1},-1;xz,yz)}{z}=y.
\end{eqnarray*}
Note that the function $U$, even if we enforce three conditions - (i), (ii), and the above boundary condition - is not uniquely defined.  For example, all three are also satisfied by
\begin{eqnarray*}
x(y+1)^{N-1}=\frac{x(B-y)^{N-1}}{B^{N-1}}=\hat{U}(A,B;x,y)=-U(x,y;A,B).
\end{eqnarray*}
Note also that $V(A,B;x,y)=-V(x,y;A,B)$. Thus, rational flows give rise to quasi-flows of genus $0$. The converse is not true. We will show this on the example of the flow $\phi^{{\rm e}}$, as given by the Proposition \ref{prop1}. Let therefore $N=1$, $\varpi(x,y)=(x^2-y^2)/2$, $\varrho(x,y)=(y^2-x^2)/2$. Then $f(x)=x+1$, and $P(x,y)=x+y$, $Q(x,y)=1$. The curve $x+y=z$ is a line and thus is of genus $0$. The system
\begin{eqnarray*}
\left\{\begin{array}{c@{\qquad}l}
a(u)+b(u)=1,\\
2a'(u)=b^{2}(u)-a^{2}(u),\\
2b'(u)=a^{2}(u)-b^{2}(u),
\end{array}\right.
\end{eqnarray*}
has a solution $a(u)=\frac{1}{2}-e^{-u}$, $b(u)=\frac{1}{2}+e^{-u}$. In this case $\v=x+y$, so let 
\begin{eqnarray*}
A&=&a(\v)\v=(x+y)\Big{(}\frac{1}{2}-e^{-(x+y)}\Big{)},\\
B&=&b(\v)\v=(x+y)\Big{(}\frac{1}{2}+e^{-(x+y)}\Big{)}.
\end{eqnarray*}
So, according to Proposition \ref{prop1},
\begin{eqnarray*}
u^{{\rm e}}(x,y)=\frac{1}{2}\big{(}(x-y)e^{x+y}+x+y\big{)}=
\frac{x^2-y^2}{B-A}+\frac{x+y}{2}=U(A,B;x,y)=V(A,B;y,x),
\end{eqnarray*}
and
\begin{eqnarray*}
U_{A}\varpi(A,B)+U_{B}\varrho(A,B)+
U_{x}\varpi(x,y)+U_{y}\varrho(x,y)=\frac{(x^2-y^2)(x+y-A-B)}{B-A},
\end{eqnarray*}
and so (\ref{u-quasi}) holds. The boundary conditions now read as 
\begin{eqnarray*}
\lim\limits_{z\rightarrow 0}\frac{U(-\frac{1}{2}(x+y)z,\frac{3}{2}(x+y)z;xz,yz)}{z}=x,\quad
 \lim\limits_{z\rightarrow 0}\frac{V(-\frac{1}{2}(x+y)z,\frac{3}{2}(x+y)z;xz,yz)}{z}=y;
\end{eqnarray*}
(since both functions are $1$-homogenic, we have an identity and not just the limit). Nevertheless, this quasi-flow arises from the flow $\phi^{{\rm e}}$ which is  not rational, though the orbits are lines. From the other side, given a projective curve $\mathscr{W}(x,y)=z^{N}$. There are infinitely many pairs of $2$-homogenic rational functions $\varpi$ and $\varrho$ such that
\begin{eqnarray*}
\mathscr{W}_{x}(x,y)\varpi(x,y)+\mathscr{W}_{y}(x,y)\varrho(x,y)=0.
\end{eqnarray*}
So, there are potentially many flows having the same orbits. 
\begin{prob}Develop a theory of quasi-flows. In particular, find all quasi-flows with orbits given by $\mathscr{W}(x,y)=c$ for any curve of arbitrary genus. Find a condition on $\varpi$ and $\varrho$ which, if satisfied, leads to a quasi-flow.
\end{prob}
As was proved in \cite{alkauskas2}, the pair $\varpi\bl\varrho$ leads to a rational flow if and only if
\begin{eqnarray*}
\frac{y\varpi_{y}(x,y)-x\varrho_{y}(x,y)}{y\varpi_{x}(x,y)-x\varrho_{x}(x,y)}=\frac{ax+by}{cx+dy}
\end{eqnarray*}
for certain $a,b,c,d\in\mathbb{C}$ satisfying $a\neq d$, and
\begin{eqnarray*}
\frac{(a+d)^2-4bc}{(a-d)^2}=M^2,\text{ for a certain }M\in\mathbb{N}.
\end{eqnarray*}
In this case, the integer $M$ is exactly the level of a flow. We therefore ask for a similar criterio for quasi-flow.
\subsection{Projective flows over finite fields.}For simplicity, consider a finite field $\mathbb{F}_{p}$, $p$ being a prime. Let $\widehat{\mathbb{F}}_{p}=\mathbb{F}_{p}\cup\{\infty\}$, and extend the sum and the multiplication operations, initially defined on $\mathbb{F}_{p}$, to $\widehat{\mathbb{F}}_{p}$ using the natural definition: that is, if $a\in\mathbb{F}_{p}\setminus\{0\}$, then $a\cdot\infty=\infty$, $a+\infty=\infty$, $\frac{a}{\infty}=0$, $0+\infty=\infty$, $\infty\cdot\infty=\infty$,
but $0\cdot\infty$, $\infty+\infty$ are undefined.
We will now find all $1-$dimenional projective flows over finite fields.
Suppose, $f:\widehat{\mathbb{F}}_{p}\mapsto\widehat{\mathbb{F}}_{p}$ satisfies
\begin{eqnarray*}
(1-z)f(x)=f\Big{(}f(xz)\frac{1-z}{z}\Big{)},\quad x,z\in\widehat{\mathbb{F}}_{p}.
\end{eqnarray*}
Then $f(x)\equiv 0$, $f(x)\equiv\infty$, or there exists $a\in\mathbb{F}_{p}$, such that
$f(x)=x(ax+1)^{-1}$. Indeed, assume $f(\infty)=b$, $b\neq 0,\infty$. Then substitution $x=\infty$ into the functional equation, for $z\neq 0$, gives
$(1-z)b=f(b(1-z)z^{-1})$, and the answer follows for $a=b^{-1}$. The cases $f(\infty)=0$ or $\infty$ are dealt similarly.
So, there are exactly $p+2$ projective $1$-dimensional flows over the finite field $\mathbb{F}_{p}$, $p$ of them are non-singular. Now consider the dimension $2$. For further convenience we note that (\ref{funkk}) implies the identity
\begin{eqnarray}
n\underbrace{\phi\circ\cdots\circ\phi}\limits_{n}(\m{x})=\phi(n\m{x}),\quad n\in\mathbb{N};
\label{iterr}
\end{eqnarray}
this is valid in any dimension over any field  provided that $n$ is not a multiple of the characteristic of this field. Returning to finite fields and dimension $2$, we encounter the interesting problem to investigate where the projective flow is defined. For example, the flows
\begin{eqnarray*}
(x-y)^2+x\bl (x-y)^2+y,\quad \frac{x}{x+1}\bl\frac{y}{y+1},\quad
\frac{x}{x+y+1}\bl\frac{y}{x+y+1}
\end{eqnarray*}
are defined and take values on spaces $\mathbb{F}_{p}^{2}$, $(\widehat{\mathbb{F}}_{p})^2$ and $P^{2}(\mathbb{F}_{p})$, of cardinalities $p^2$, $(p+1)^2$, $p^2+p+1$, respectively. In fact, these maps are automorphisms of corresponding spaces (in this case this simply means they are bijections). On the other hand, let $p$ be an odd prime. The nature of the flow
\begin{eqnarray*}
\phi_{p}(x,y)=\frac{x^2+y^2+2x}{(x+1)^2+(y+1)^2}\bl \frac{x^2+y^2+2y}{(x+1)^2+(y+1)^2}
\end{eqnarray*}
depends on the arithmetic of $p$. If $\left(\frac{-1}{p}\right)=-1$, this flow is the bijection from $\mathbb{F}_{p}^{2}\cup\{\infty\}$ onto itself, where $\phi(-1,-1)=\infty$, $\phi(\infty)=1\bl 1$; this space (``sphere") is of cardinality $p^2+1$. Consider now the case $\left(\frac{-1}{p}\right)=1$; for example, take $p=5$. For $(x,y)\in\mathbb{F}_{5}^2$, the function $\phi$ is defined for $16$ pairs, for $9$ pairs it is undefined, and it takes also $16$ and leaves $9$ values. It appears that the only meaningful completion is to add $11$ additional points. For example, since  $\phi(3\bl 1)$ is undefined, let $\phi(3\bl 1)=\eta$. Further, by the projective flow property (\ref{iterr}) $\phi(3\bl 1)=\phi\circ\phi(4\bl 3)=2^{-1}\phi(2(4\bl 3))=2^{-1}\phi(3\bl 1)$, so $2\eta=\eta$. So, $\eta$ can be formally denoted by $\infty\bl 0$. Further, let $\phi(0\bl 1)=\alpha$. Then $\phi(\alpha)=\phi\circ\phi(0\bl 1)=2^{-1}\phi(2(0\bl 1))=2^{-1}\phi(0\bl 2)=3\phi(0\bl 2)$. More calculations confirm that $\alpha$ can be formally denoted by $1\bl\infty$. Once again, $\phi(\eta)=\phi\circ\phi(3\bl 1)=2^{-1}\phi(2(3\bl 1))=2^{-1}\phi(1\bl 2)=2^{-1}(4\bl 3)=2\bl 4$. Also, $\phi\circ\phi\circ\phi(3\bl 1)=3^{-1}\phi(3(3\bl 1))=3^{-1}\phi(4\bl 3)=1\bl 2$. By direct similar calculations, we thus complete the  Table \ref{table3}.
\begin{table}[h]
\begin{tabular}{c|  c c c c c c}
$x\backslash y$& $0$ & $1$ & $2$ & $3$ & $4$ & $\infty$\\
  \hline
0 & $0\bl 0$ & $1\bl\infty$ & $\infty\bl 1$ & $2\bl0$ & $1\bl4$ & $4\bl 2$\\
1 & $\infty\bl 2$ & $3\bl3$ & $4\bl 3$ & $0\bl\infty$ & $1\bl0$ & $\infty\bl 3$\\
2 & $3\bl\infty$ & $3\bl 4$ & $4\bl4$ & $\infty\bl 4$ & $1\bl2$ & $3\bl 0$\\
3 & $0\bl 2$ & $\infty\bl 0$ & $2\bl\infty$ & $2\bl2$ & $1\bl3$ & $2\bl 3$\\
4 & $4\bl1$ & $0\bl1$ & $2\bl1$ & $3\bl1$ & $\infty\bl\infty$ & $0\bl 4$\\
$\infty$ & $2\bl 4$ & $3\bl 2$ & $4\bl\infty$ & $4\bl 0$ & $0\bl 3$ & $1\bl 1$\\
\hline
\end{tabular}
\caption{The flow $\phi_{p}$ over the finite field $\mathbb{F}_{5}$}
\label{table3}
\end{table}
The symbols $1\bl\infty$ and $0\bl\infty$, for example, obey the rules $2(1\bl\infty)=2\bl\infty$ and $3(0\bl\infty)=0\bl\infty$.  So, the case $\left(\frac{-1}{p}\right)=1$ corresponds a bijection of $(\widehat{\mathbb{F}}_{p})^{2}$ onto itself and thus to torus rather than a sphere. On the other hand, the analysis of the flow
\begin{eqnarray*}
\frac{x(x^2+y^2)}{x^2y+xy^2+x^2+y^2}\bl\frac{x(x^2+y^2)}{x^2y+xy^2+x^2+y^2}
\end{eqnarray*}
leads to investigations of the arithmetic of genus $0$ cubic $x^2y+xy^2+x^2+y^2=0$ over $\mathbb{F}_{p}$. Further, it is obvious that the number of projective flows over a finite field in any dimension is finite. Finally, note that the exponential flow $\phi^{\rm exp}$ has no analogue in the finite field setting; for example, the flow $x2^{y}\bl y$ is not property defined on $(\mathbb{F}_{p})^2$ for $p$ odd. So, we formulate the following
\begin{prob}Let $k\geq 2$. Describe all projective flows over $(\mathbb{F}_{p})^{k}$. What is the total number of flows? How they distribute among different completions of  $(\mathbb{F}_{p})^{k}$? Do all these flows arise as reductions {\rm mod }$p$ of rational flows over $\mathbb{Q}$ (like $\phi_{p}$ above)?
\end{prob}
%\subsection{Number theory. Rational flows over number fields}\emph{To be completed. }
%\begin{prob}...
%\end{prob}


\begin{thebibliography}{3000}
\bibitem[Ac66]{aczel}{\sc J. Acz\'{e}l}, {\it Lectures on functional equations and their applications}, Mathematics in Science and Engineering, Vol. 19 Academic Press, New York-London 1966.

\bibitem[Ah48]{ahiezer}{\sc N.I. Ahiezer}, N. I. {\it \`{E}lementy teorii \`{e}llipti\u{c}eskih funkci\u{i}}. (Russian) [{\it Elements of the Theory of Elliptic Functions}] Gosudarstv. Izdat. Tehn.-Teor. Lit., Moscow-Leningrad, 1948. 291 pp.

\bibitem[Al10]{alkauskas1} {\sc G. Alkauskas}, Multi-variable translation equation which arises from homothety. {\it Aequationes Math.}
{\bf 80} (3) (2010), 335--350; {\tt arxiv.org/abs/0911.1513}.

\bibitem[Al12]{alkauskas2}{\sc G. Alkauskas}, The projective translation equation and rational plane flows. I (submitted); {\tt arxiv.org/abs/1201.0894}.

\bibitem[Alpr1]{alkauskas3}{\sc G. Alkauskas}, The projective translation equation and rational plane flows. II  (in preparation)

\bibitem[Alpr2]{alkauskas4}{\sc G. Alkauskas}, The projective translation equation: rational vector fields and quasi-flows (tentative title).

\bibitem[Di90]{dixon}{\sc A.C. Dixon}, On the doubly periodic functions arising out of the curve $x^3 + y^3 - 3\alpha xy = 1$, {\it Quart. J.} {\bf XXIV} (1890). 167--233.


\bibitem[Du79]{dumont1}{\sc D. Dumont}, A combinatorial interpretation for the Schett recurrence on the Jacobian elliptic functions. {\it Math. Comp.} {\bf 33} (1979), 1293--1297.


\bibitem[Du81]{dumont3}{\sc D. Dumont}, Une approche combinatoire des fonctions elliptiques de Jacobi. {\it Adv. Math.} {\bf 1} (1981), 1--39.

\bibitem[Du86]{dumont2}{\sc D. Dumont}, Grammaires de William Chen et d\'{e}rivations dans les arbres et arborescences, {\it S\'{e}m. Lothar. Combin.} {\bf 37} (1996), Art. B37a, 21 pp. (electronic).

\bibitem[BAT]{erdelyi}{\sc A. Erd\'{e}lyi, W. Magnus, F. Oberhettinger, F.G. Tricomi, Francesco G.}, {\it Higher transcendental functions. Vol. I. Based on notes left by Harry Bateman.} With a preface by Mina Rees. With a foreword by E. C. Watson. Reprint of the 1953 original. Robert E. Krieger Publishing Co., Inc., Melbourne, Fla., 1981.

\bibitem[BF10]{flajolet-b}{\sc R. Bacher, Ph. Flajolet}, Pseudo-factorials, elliptic functions, and continued fractions. {\it Ramanujan J.}
{\bf 21} (1)(2010), 71--97; {\tt arxiv.org/abs/0901.1379}.

\bibitem[BF11]{flajolet-bl}{\sc P. Blasiak, Ph. Flajolet}, Combinatorial models of creation-annihilation. {\it S\'{e}m. Lothar. Combin.} {\bf 65} (2011), Art. B65c; {\tt arxiv.org/abs/1010.0354}.

\bibitem[FGP05]{flajolet-g-p}{\sc Ph. Flajolet, J. Gabarr\'{o}, H. Pekari}, Analytic urns.
{\it Ann. Probab.} {\bf 33} (3) (2005), 1200--1233; {\tt arxiv.org/abs/math/0407098}.

\bibitem[FCF05]{flajolet-c}{\sc E.van Fossen Conrad, Ph. Flajolet}, The Fermat cubic, elliptic functions, continued fractions, and a combinatorial excursion.
{\it S\'{e}m. Lothar. Combin.} {\bf 54} (2005/07), Art. B54g; {\tt arxiv.org/abs/math/0507268}.

\bibitem[FDP06]{fdp}{\sc Ph. Flajolet, Ph. Dumas, V. Puyhaubert}, Some exactly solvable models of urn process theory. {\it Fourth Colloquium on Mathematics and Computer Science Algorithms, Trees, Combinatorics and Probabilities, Discrete Math. Theor. Comput. Sci. Proc., AG, Assoc. Discrete Math. Theor. Comput. Sci., Nancy}, (2006), 59--118.

\bibitem[FR08]{r-r1} {\sc H. Fripertinger, L. Reich}, The formal translation equation and formal cocycle equations for iteration groups of type I. {\it Aequationes Math.}
{\bf 76} (1-2) (2008), 54--91.

\bibitem[FR10]{r-r2} {\sc H. Fripertinger, L. Reich}, The formal translation equation for iteration groups of type II. {\it Aequationes Math.}
{\bf 79} (1-2) (2010), 111--156.

\bibitem[Kn92]{knapp}{\sc A.W. Knapp}, {\it Elliptic curves}, Mathematical Notes, 40. Princeton University Press, Princeton, NJ, 1992.

\bibitem[LS87]{shabat}{\sc M.A. Lavrent'ev, B.V. Shabat},  {\it Metody teorii funktsi\u{i} kompleksnogo peremennogo}. (Russian) [{\it Methods of the theory of functions in a complex variable}] Fifth edition. Nauka, Moscow, 1987. 688 pp.

\bibitem[Sch76]{schett} {\sc A. Schett}, Properties of the Taylor series expansion coefficients of the Jacobian elliptic functions. {\it Math. Comp.}
{\bf 30} (1976), 143--147.

\bibitem[Mo73]{moszner1}{\sc Z. Moszner}, The translation equation and its application. {\it Demonstratio Math.} {\bf 6} (1973), 309--327.

\bibitem[Mo95]{moszner2}{\sc Z. Moszner}, General theory of the translation equation. {\it Aequationes Math.}  {\bf 50}(1-2)  (1995), 17--37.

\bibitem[Vie80]{viennot}{\sc G. Viennot}, Une interpretation combinatoire des coefficients des d\'{e}veloppments en s\'{e}rie enti\'{e}re des fonctions elliptiques de Jacobi. 
{\it  J. Combin. Theory Ser. B.}  {\bf 29} (1980), 121--133.

\end{thebibliography}
\end{document}